\newcommand{\ddt}{\tfrac{\mathrm{d}}{\mathrm{d} t}}
\DeclareMathOperator{\IM}{im}
\DeclareMathOperator{\RE}{Re}
\DeclareMathOperator{\diag}{diag}
\def\diff{\text{d}}
\newcommand\ones{\mathbbm{1}}
\newcommand\tr{\top}
\newcommand\varsplit{x}
\newcommand{\real}{\mbox{$\mathbbm{R}$}}
\newcommand{\nats}{\mathbb{N}}
\newcommand\cond{\text{cond}}
\newtheorem{theorem}{Theorem}
\newtheorem{prop}[theorem]{Proposition}%
\newtheorem{example}[theorem]{Example}%
\newtheorem{remark}[theorem]{Remark}%
\newtheorem{ass}[theorem]{Assumption}
\newtheorem{definition}[theorem]{Definition}%
\newtheorem{lemma}[theorem]{Lemma}
\begin{document}

\title{Operator Splitting Based Dynamic Iteration for Linear Port-Hamiltonian Systems}

\author{Andreas Bartel${}^{1}$, Michael Günther${}^{1}$, Birgit Jacob${}^{1}$ and Timo Reis${}^{2}$\\[1ex]
 ${}^{1}$ IMACM, Bergische Unviersität Wuppertal,\\ Gaußstr. 20, 42119 Wuppertal, Germany \\[1ex]
 ${}^{2}$ Institut f\"ur Mathematik, Technische Universit\"at Ilmenau,\\ Weimarer Str. 25, 98693 Ilmenau, Germany}
\maketitle

{\textbf{Abstract.} A dynamic iteration scheme for linear differential-algebraic port-Hamil\-tonian systems based on  Lions-Mercier-type operator splitting methods is developed.  The dynamic iteration is monotone in the sense that the error is decreasing and no stability conditions are required. The developed iteration scheme is even new for linear port-Hamiltonian systems. The obtained algorithm is applied to multibody systems and electrical networks.%
}
\bigskip

{\textbf{Keywords. } Port-Hamiltonian Systems, DAE, Operator Splitting, Dynamic Iteration.}
\bigskip

\textbf{MSC Classification. } 37Jxx, 34A09
\bigskip

\section{Introduction}
Dynamic iteration schemes for differential-algebraic equations (DAE) have been widely used and discussed for multiphysics problems, which allow for an easy exploitation of the different properties of the subystems~\cite{argu2001,ali2015,bagu2018,babs2014,bbgs2012,Arnold2022}. In general, we consider an initial value problem for a (semi-explicit index-1) DAE on $t\in[0,T]$ of the form 
\begin{subequations}
	\label{eq.dae}
	\begin{align}
		\dot v & = f(t,v,w), \quad v(0)=v_{0},\\
		0 & = g(t,v,w),
	\end{align}
\end{subequations}
where $\partial g(t,v,w)/\partial w$ is regular in a neighborhood of the solution $(v(t), w(t))^\tr$, $t\in [0, T]$. 
A general dynamic iteration scheme is given by an initial guess
$(v^{(0)}(t), w^{(0)}(t))^\tr$ ($t\in[0,T]$, i.e., an initial waveform) and the solution $(v^{(k)}(t), w^{(k)}(t))^\tr$ of the DAE for integer $k\ge 1$
\begin{align*}
	\dot v^{(k)} & = F(t,v^{(k)},v^{(k-1)},w^{(k)},w^{(k-1)}), \quad v^{(k)}(0)=v_0,\\
	0 & = G(t,v^{(k)},v^{(k-1)},w^{(k)},w^{(k-1)}) 
\end{align*}
with arbitrary splitting functions $F,G$ fulfilling the %
compatibility conditions 
\[ 
F(t,v,v,w,w)=f(t,v,w) \quad \text{and} \quad G(t,v,v,w,w)=g(t,v,w). 
\]
Hence a dynamic iteration defines as a mapping from
the $(k-1)$-th iterate to the $k$-th iterate: 
\[ (v^{(k-1)}(t),\,w^{(k-1)}(t)) \mapsto    
(v^{(k)}(t),w^{(k)}(t)).
\]
One drawback of such schemes for DAE is given by the fact that stability conditions for a guaranteed convergence must hold in a twofold manner: 
\begin{enumerate}[(a)]
	\item %
	convergence within one time window has to be present~\cite{argu2001,jaka1996}, and 
	\item %
	stable error propagation from window to window is needed~\cite{argu2001,ali2015}. 
\end{enumerate}
If one of these conditions is not met choosing arbitrary small time windows will not help out (in general).

In the case of initial value problems of ordinary differential equations
\begin{align*}
	\dot x & = f(t,x), \quad x(0)=x_0, \quad t\in [0,T],
\end{align*}
convergence of dynamic iteration schemes 
(analogously defined) 
is given 
for arbitrary long time windows $[0,T]$. However, the convergence might not be monotone, and the error might increase drastically in the beginning, causing an overflow in computer implementations, 
see chapter~\ref{chapter.failure} for an example. 

In this work, we show that an alternative iterative approach for linear DAE systems can avoid both flaws described above if the DAE system~\eqref{eq.dae} is composed of coupled port-Hamiltonian systems. This type of systems is motivated by physics, provides an energy balance and gives a~simple framework for coupled systems \cite{9030180,JvdS14,MascvdSc18,MvdS20,vdS04,vdS13,GeHaRe20,GeHaRevdS20,BMXZ18}. Now, the alternative iterative approach is based on an iteration scheme from Lions and Mercier~\cite{Lions1979}. The convergence will be monotone, and no stability conditions appear.

The paper is organized as follows: in the subsequent  chapter~2, the framework of linear coupled port-Hamiltonian DAE systems is set. 
Both, the perspective of an overall coupled system and the perspective of a system composed of coupled port-Hamiltonian DAE subsystems is given. Chapter 3 illustrates the limits of classical dynamic iteration schemes such as the Jacobi iteration, which motivates the derivation and analysis of operator splitting based dynamic iteration schemes in the following. Chapter 4 recapitulates some basic facts on maximal monotone operators, which are used in chapter 5 to derive the monotone convergence results of operator splitting based dynamic iteration schemes. An estimate for the convergence rate can be given in the case of ODE systems and DAE systems with enough dissipation. 
Numerical results are discussed in chapter 6. The paper finishes with some concluding remarks and an outlook.

\section{The setting}\label{sec:setting}
We consider linear time-invariant initial value problems of DAEs, which have the following structure:

\begin{definition}
	For a final time $T>0$, the descriptor variable $x(t) \in \real^n$ and $t\in [0,T]$, we consider
	\begin{subequations}\label{eqn:DAEsystemQ}
		\begin{align}
			\ddt E x(t)&=(J-R)Qx(t)+Bu(t),\quad t\in [0,T],  \quad Ex(0)=Ex_0,\\
			y(t)&=B^\tr Qx(t),
			%
		\end{align}
	\end{subequations}
	where $E$, $J$, $R$ and $Q$ are real $n\times n$-matrices, $B$ is a $n\times m$-matrix, $E x_0 \in \real^n$ is a given initial value, $u$ is a given input and $y$ is referred to as output. (The matrix $E$ may have no full rank.) 
	\hfill $\Box$
\end{definition}

Throughout this article, we are working with weak solutions. 

\begin{definition}[Solution]
	Let $x_0\in \real ^n$ and $u\in L^2([0,T], \real^m)$.
	A function $x\in L^2([0,T],\real^n)$ is called \textrm{(weak) solution} of \eqref{eqn:DAEsystemQ} if $Ex \in H^1([0,T],\real^n)$, $Ex(0)=Ex_0$ and \eqref{eqn:DAEsystemQ} is satisfied 
	for $t\in[0,T]$ almost everywhere. \hfill $\Box$
\end{definition}

\begin{prop}\label{prop:PHS-structure}
	Assume that a~differential-algebraic system \eqref{eqn:DAEsystemQ} is given with $E,J,R\in\real^{n\times n}$, $B\in\real^{n\times m}$ satisfying $E^\tr Q=Q^\tr E\ge0$, $R=R^\tr\ge 0$ and $J^\tr=-J$. Furthermore, let $u\in L^2([0,T],\real^m)$, $x_0\in\real^n$, and let $x\in L^2([0,T],\real^n)$ be a solution of \eqref{eqn:DAEsystemQ}. Then the following statements hold:
	\begin{enumerate}[(a)]
		\item The function $t\mapsto x(t)^\tr Q^\tr Ex(t)$ is weakly differentiable with
		\[
		\ddt \big(x^\tr Q^\tr Ex\big)
		= 2x^\tr Q^\tr\big(\ddt Ex) \,\in L^1([0,T],\real).
		\]
		\item The following {\em dissipation inequality} holds for all $t\in[0,T]$:
		\begin{multline*}
			\tfrac12\big(x(t)^\tr Q^\tr Ex(t)\big)-\tfrac12\big(x_0^\tr Q^\tr Ex_0\big)
			\\
			=-\int_0^tx(\tau)^\tr Q^\tr RQ x(\tau)d\tau
			+\int_0^tu(\tau)^\tr y(\tau)d\tau
			\leq \int_0^tu(\tau)^\tr y(\tau)d\tau
			.
		\end{multline*}
	\end{enumerate}
\end{prop}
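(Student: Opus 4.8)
The plan is to establish part (a) first, since part (b) follows from it by integration. The key difficulty is that we only know $Ex \in H^1$, not that $x$ itself is differentiable, so we cannot naively apply the product rule. The strategy is to exploit the symmetry hypothesis $E^\tr Q = Q^\tr E$ to rewrite $x^\tr Q^\tr E x$ in a form that only involves $Ex$ paired against $x$.

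First I would observe that, writing $z := Ex \in H^1([0,T],\real^n)$, the scalar function in question is $t \mapsto x(t)^\tr Q^\tr z(t)$. For a smooth approximation argument, I would take a sequence $x_j \in C^1$ (or at least with $Ex_j$ smooth) approximating $x$ appropriately in $L^2$ together with $Ex_j \to Ex$ in $H^1$; then for the smooth objects the product rule gives
\[
\ddt\big(x_j^\tr Q^\tr E x_j\big) = \dot x_j^\tr Q^\tr E x_j + x_j^\tr Q^\tr E \dot x_j = \dot x_j^\tr \big(Q^\tr E + E^\tr Q\big) x_j = 2 x_j^\tr Q^\tr E \dot x_j,
\]
where the middle equality uses $E^\tr Q = Q^\tr E$ to symmetrize. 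The point of this computation is that the right-hand side $2 x_j^\tr Q^\tr (\ddt E x_j)$ makes sense purely in terms of $x_j$ and $\ddt(Ex_j)$, both of which converge ($x_j \to x$ in $L^2$, $\ddt Ex_j \to \ddt Ex$ in $L^2$), so the product $2 x_j^\tr Q^\tr \ddt(Ex_j)$ converges in $L^1$ to $2 x^\tr Q^\tr \ddt(Ex)$. Passing to the limit in the distributional identity $\int \varphi' \,(x_j^\tr Q^\tr E x_j) = -\int \varphi\, 2 x_j^\tr Q^\tr \ddt(Ex_j)$ for test functions $\varphi$ then yields weak differentiability of $t \mapsto x^\tr Q^\tr E x$ with the claimed derivative, and the $L^1$ membership is immediate from Cauchy--Schwarz since both factors are in $L^2$.

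For part (b), I would integrate the identity in (a) from $0$ to $t$, using that $Ex(0) = Ex_0$ (and that $x_0^\tr Q^\tr Ex_0$ is the correctly-interpreted boundary value — here one should note $x(0)$ need not be defined, but $x^\tr Q^\tr Ex = (Ex)^\tr (E^{\dagger})^\tr\cdots$; more cleanly, the quantity $\tfrac12 x^\tr Q^\tr E x$ extends continuously and its value at $0$ is $\tfrac12 x_0^\tr Q^\tr E x_0$ by the symmetry and the initial condition on $Ex$). Then I substitute the dynamics: $\ddt Ex = (J-R)Qx + Bu$, so
\[
2 x^\tr Q^\tr \ddt(Ex) = 2 x^\tr Q^\tr (J-R) Q x + 2 x^\tr Q^\tr B u = -2 x^\tr Q^\tr R Q x + 2 u^\tr y,
\]
using $x^\tr Q^\tr J Q x = 0$ (skew-symmetry of $J$, since $Q^\tr J Q$ is skew) and $y = B^\tr Q x$. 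Integrating and dividing by $2$ gives the stated equality; the final inequality follows because $R = R^\tr \ge 0$ forces $\int_0^t x^\tr Q^\tr R Q x \,d\tau \ge 0$.

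The main obstacle is the regularity/approximation step in part (a): making precise in what sense $x$ can be approximated so that both $x_j^\tr Q^\tr E x_j \to x^\tr Q^\tr E x$ in, say, $L^1$ and $x_j^\tr Q^\tr \ddt(Ex_j) \to x^\tr Q^\tr\ddt(Ex)$ in $L^1$ simultaneously. A clean way around this is to avoid approximating $x$ directly and instead work with the bilinear form: one shows that for $z \in H^1$ and any $x \in L^2$ with a prescribed "velocity" $\dot z \in L^2$ satisfying $z = Ex$, the chain-rule identity is really the statement that $\ddt \langle x, Q^\tr z\rangle = \langle x, Q^\tr \dot z\rangle + \langle \dot z , (E^\dagger)^\tr Q^\tr z\rangle$-type manipulation collapses under $E^\tr Q = Q^\tr E$; alternatively, mollify only in time (convolve $x$ and $z$ with the same mollifier), which preserves the linear relation $z = Ex$ and the needed convergences, then pass to the limit. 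I expect this mollification argument to be the cleanest route and the place where care is genuinely needed.
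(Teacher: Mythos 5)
Your proposal is correct, and part (b) is handled exactly as in the paper (symmetrize with $E^\tr Q=Q^\tr E$, insert the dynamics, use skew-symmetry of $Q^\tr JQ$ and $R\ge 0$). For part (a), however, your main route differs from the paper's. The paper avoids approximating $x$ altogether: using the Moore--Penrose inverse it writes $E^\tr Q=E^\tr(E^+)^\tr E^\tr Q=E^\tr(E^+)^\tr Q^\tr E$, hence $Q^\tr E=E^\tr XE$ with $X=(E^+)^\tr Q^\tr$, so that $x^\tr Q^\tr Ex=(Ex)^\tr X(Ex)$ is a quadratic form in the $H^1$-function $Ex$ alone; the product rule for weak derivatives (cited from Alt) then applies directly and gives the claimed identity. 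You instead mollify in time and pass to the limit in the distributional identity, which is sound — the convergences you need follow from Cauchy--Schwarz, and mollification preserves the linear relation between $x$ and $Ex$ — but it leaves you with the boundary/extension bookkeeping on $[0,T]$ that you yourself flag as the delicate point, and it is precisely this step that the paper's algebraic rewriting eliminates. Notably, the device you invoke only in passing (the ``$(Ex)^\tr(E^\dagger)^\tr\cdots$'' rewriting, needed anyway to make sense of the boundary value $x_0^\tr Q^\tr Ex_0$ in part (b) and to see that the quadratic form has an absolutely continuous representative) is the paper's entire proof of (a); promoting that aside to the main argument would shorten your proof and remove the approximation step. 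Both approaches are valid: yours is elementary and self-contained, the paper's is shorter at the cost of quoting the Sobolev product rule.
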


\begin{proof}
	\begin{enumerate}[(a)]
		\item We have $E^\tr Q=Q^\tr E$, and thus, for the Moore-Penrose inverse $E^+$ of $E$, we have \[E^\tr Q=E^\tr (E^+)^\tr E^\tr Q=E^\tr (E^+)^\tr Q^\tr E.\] Consequently, for $X=(E^+)^\tr Q^\tr\in \mathbb{R}^{n\times n}$, we have $Q^\tr E=E^\tr XE$. By using that $Ex \in H^1([0,T],\mathbb R^n)$, the product rule for weak derivatives \cite[Thm.~4.25]{Alt16} implies that
		\[x^\tr Q^\tr Ex=(Ex)^\tr X (Ex)\in W^{1,1}([0,T],\mathbb{R})\]
		with
		\begin{align*}
			\ddt(x^\tr Q^\tr Ex) & = \ddt \left((Ex)^\tr X (Ex) \right)
			= 2  x^\tr E^\tr X \ddt\left(Ex\right)
			= 2  x^\tr \ddt\left( E^\tr X Ex\right) \\
			&= 2  x^\tr \ddt\left( Q^\tr Ex\right)
			= 2  x^\tr Q^\tr\ddt\left(  Ex\right).
		\end{align*}
		\item The previous statement yields
		\begin{align*}
			\ddt\tfrac12(x^\tr Q^\tr Ex)
			&=x^\tr Q^\tr \ddt(Ex)
			= x^\tr Q^\tr ((J-R)Qx+Bu) 
			\\
			&
			= -(Qx)^\tr R (Qx) + x^\tr Q Bu \le (B^\tr Qx)^\tr u = y^\tr u.
		\end{align*}
		Now an integration on $[0,t]$ leads to the dissipation inequality.
	\end{enumerate}
	\vspace*{-.9\baselineskip}
\end{proof}

Thus, we investigate the following class of port-Hamiltonian DAEs: 
\begin{definition}[PH-DAE]\label{def:PH-DAE} 	
The system~\eqref{eqn:DAEsystemQ} with the assumptions of Prop.~\ref{prop:PHS-structure} is here referred  as port-Hamiltonian DAE (PH-DAE).
	\hfill $\Box$
\end{definition}

\subsection{Perspective as overall coupled system}
We further assume that \eqref{eqn:DAEsystemQ} is composed of several port-Hamiltonian systems which are coupled in an energy-preserving way. The assumptions are collected in the following.
\begin{ass}\label{ass:overall-ivp}(Overall coupled PH-DAE) We consider the setting \eqref{eqn:DAEsystemQ}.
	\begin{enumerate}[(a)]
		\item\label{ass:overall-ivpa} The matrices in \eqref{eqn:DAEsystemQ} are, for some $s,n_1,\ldots,n_s\in\mathbb{N}$, structured as
		\begin{gather}
			E=\begin{bmatrix}E_1&&\\&\ddots\\&&E_s\end{bmatrix}\!,
			\;\; 
			R= \begin{bmatrix}R_1&&\\&\ddots\\&&R_s\end{bmatrix}\!,
			\;\;
			Q=\begin{bmatrix}Q_1&&\\&\ddots\\&&Q_s\end{bmatrix}\!,\label{eq:EQBprop}
			\\
			J=\begin{bmatrix}
				J_1&J_{12}&\cdots&J_{1s}\\
				-J_{12}^\tr&\ddots&\ddots&\vdots\\
				\vdots&\ddots&\ddots&J_{s-1,s}\\
				-J_{1s}^\tr&\cdots&-J_{s-1,s}^\tr &J_s
			\end{bmatrix}\!,\;\;
			B=\begin{bmatrix}B_1\\\vdots\\\vdots\\B_s\end{bmatrix},\label{eq:EQBprop2} %
		\end{gather}
		with $n_1+n_2+\dotsc + n_s =n$, where 
		\begin{enumerate}[(i)]
			\item for $i=1,\dotsc,\, s$, $E_i,Q_i,J_i,R_i \in \mathbb{R}^{n_i\times n_i}$ with $R_i=R_i^\tr\ge0$, $J_i^\tr=-J_i$, $E_i^\tr Q_i=Q_i^\tr E_i\ge0$, $Q_i$ invertible and $B_i \in \mathbb R^{n_i\times m}$;
			\item  $J_{ij}\in\mathbb R^{n_i\times n_j}$ for $i,j\in\{1,\ldots,s\}$ with $i<j$.
		\end{enumerate}
		\item\label{ass:overall-ivpb} 
		$\mathrm{rk}\begin{bmatrix}
			E &
			R &
			J
		\end{bmatrix}=n$.
		\item\label{ass:overall-ivpc} $T>0$, $x_0\in\mathbb{R}^n$ and $u\in L^2([0,T];\mathbb{R}^m)$. Furthermore, for matrices $Z\in\mathbb{R}^{n\times r}$, $Z_1\in\mathbb{R}^{r\times r_1}$, with full column rank and
		\begin{align*}
			\IM Z=&\,\ker E,\\
			\IM Z_1=&\,\ker RQZ\cap \ker Z^\tr Q^\tr JQZ,
		\end{align*}
		the input $u$ and the initial value $x_0$  fulfill
		\begin{align*}
			Z_1^\tr Z^\tr Q^\tr Bu\in &\,H^1([0,T];\mathbb{R}^{r_1}) \quad 
			\text{ with }  Z_1^\tr Z^\tr Q^\tr (JQx_0+Bu(0))=0.
			\tag*{$\Box$}
		\end{align*}
		\end{enumerate}
\end{ass}

\begin{remark}\label{remark:partitioned-PHS}
	Notice that with Assumption~\ref{ass:overall-ivp}, we have for \eqref{eqn:DAEsystemQ}:
	\begin{enumerate}[(a)]
		\item  $Q^\tr E=E^\tr Q\geq0,\quad  R=R^\tr\geq 0,\quad \mathrm{and} \quad 
		Q$ 	is invertible. Hence, the overall coupled PH-DAE of Assumption~\ref{ass:overall-ivp} is a PH-DAE (in the sense of Definition~\ref{def:PH-DAE}).
		\item As $Q^\tr E\geq0$, we have that $EQ^{-1}=Q^{-\tr}(Q^TE)Q^{-1}$ is positive semi-definite as well, and therefore possesses a~matrix square root $(EQ^{-1})^{1/2}$. 
		\item \label{remark:partitioned-PHS-subsystems} 
		The $i$th subsystem for the variable $x_i$ reads:
		\[
		E_i x_i = (J_i-R_i)Q_i x_i + B_i u \; - \sum_{j<i} J_{ji}^\tr Q_j x_j 
		+ \sum_{i<j} J_{ij} Q_j x_j.
		\]
		The coupling is facilitated by the off-diagonal terms within the matrix $J$.
		\hfill $\Box$
	\end{enumerate}
\end{remark}

\begin{remark}[Coupled linear PH-DAE systems]
\label{rem.coupledphs}
A PH-DAE system of type~\eqref{eqn:DAEsystemQ} with coupling structure~\eqref{eq:EQBprop} and \eqref{eq:EQBprop2} is naturally given in the case of $s$ coupled linear PH-DAE systems as follows: consider $s$ PH-DAE subsystems 
$$
\begin{aligned}
\ddt E_i  x_i(t) 
    =& (\tilde J_i  - R_i) Q_i x_i(t) + B_i u_i (t),
&y_i(t) =    B_i^\tr  Q_i x_i(t),
 \end{aligned}
$$
$i=1,\ldots,s.$ with $\tilde J_i=-\tilde J_i^\top$, $R_i=R_i^\top \ge 0$, %
and $Q_i^\top E_i=E_i^\top Q_i \ge 0$.
The input $u_i$, the output $y_i$ and $B_i$ are split into
$$
    \label{eq:split-input-output}
    u_i(t)= \begin{pmatrix}
                \hat u_i(t) \\
                \bar u_i(t)
        \end{pmatrix},
        \quad
    y_i(t)= \begin{pmatrix}
                \hat y_i(t) \\
                \bar y_i(t)
        \end{pmatrix}, \quad 
      B_i = \begin{pmatrix}
                \hat B_i &
                \bar B_i
            \end{pmatrix}   
$$
according to external and coupling quantities.
The subsystems are coupled via external inputs and outputs by %
\begin{align*}
    \begin{pmatrix}
    \hat u_1 \\ \vdots \\ \hat u_k
    \end{pmatrix} 
    + \hat C 
    \begin{pmatrix}
    \hat y_1 \\ \vdots \\ \hat y_k
    \end{pmatrix} = 0, \qquad \hat C = - \hat C^\top. 
\end{align*}
These $s$ systems can be condensed to one large PH-DAE system~\cite{bagu2018}
\begin{subequations}\label{cond.ph.dae}
\begin{align}
\ddt E  x & =   (J -R)  Q x  +  \bar B \bar u, \\
\bar y & =  \bar B^{\tr} Q x 
\end{align}
\end{subequations}
with $J=\tilde J - \hat B \hat C \hat B^{\tr}$ and the condensed quantities 
\begin{gather*}
 v^{\tr} =  (v_1^{\tr},\ldots, v_s^{\tr})
    \quad  %
        \text{ for } v \in \{x, \,\bar u, \bar y\},
       \\
  F=\diag\,(F_1,\ldots, F_s)
    \quad %
        \text{ for } F \in \{E,\,Q,\,\tilde J,\,R,\,\hat{B},\, \bar{B}\}.
\end{gather*}
Equation~\eqref{cond.ph.dae}
defines now a PH-DAE system of type~\eqref{eqn:DAEsystemQ} with structure given by  \eqref{eq:EQBprop} and \eqref{eq:EQBprop2}: the matrices $\tilde J$ and $R$ are block-diagonal, and the Schur complement type matrix $-\hat B \hat C \hat B^{\tr}$ has only off-block diagonal entries. Note that the coupling has been shifted from the port matrices $B_i$ to the off-block diagonal part of $J$. \hfill $\Box$
\end{remark}
Now we collect some properties of systems \eqref{eqn:DAEsystemQ} with properties as in 
	Assumption~\ref{ass:overall-ivp}. To this end, we note that a~matrix pencil $sE-A\in\mathbb R[s]^{n\times n}$ is called {\em regualar}, if $\det(sE-A)$ is not the zero polynomial. Furthermore, for a~regular pencil $sE-A$, the {\em index\,}~$\nu\in\mathbb{N}_0$ of the DAE $\ddt E x(t)=Ax(t)+f(t)$ is the smallest number for which the mapping $\lambda\mapsto \lambda^{1-\nu}(\nu E-A)^{-1}$ is bounded outside a~compact subset of $\mathbb{C}$. Note that the index equals to the number of differentiations of the DAE needed to obtain an ordinary differential equation \cite[Chap.~2]{KunkMehr06}.
\begin{prop}\label{prop:DAEbasicprop}
	Any system \eqref{eqn:DAEsystemQ} fulfilling %
	Assumption~\ref{ass:overall-ivp} has the following properties.
	\begin{enumerate}[(a)]
		\item The pencil $sE-(J-R)Q\in\mathbb R[s]^{n\times n}$ is regular.
		\item There exists a~unique solution $x$ of \eqref{eqn:DAEsystemQ}.
			  \item The index of the DAE \eqref{eqn:DAEsystemQ}
	  is at most two.
	\end{enumerate}
\end{prop}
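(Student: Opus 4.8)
\medskip
\noindent\emph{Proof plan.}
For part~(a), the plan is to use that $Q$ is invertible: write $sE-(J-R)Q=\big(sEQ^{-1}-(J-R)\big)Q$ and put $M:=EQ^{-1}=Q^{-\tr}(Q^\tr E)Q^{-1}$, which is symmetric with $M\ge0$. It then suffices to show that $\lambda M+R-J$ is invertible for one — in fact every — real $\lambda>0$: if $(\lambda M+R-J)x=0$, multiplying by $x^\tr$ and using $x^\tr Jx=0$ together with $M,R\ge0$ forces $Mx=0$, $Rx=0$, hence $Jx=0$; since $M$ is symmetric with $\IM M=\IM E$ we have $\ker M=\ker E^\tr$, and likewise $\ker R=\ker R^\tr$, $\ker J=\ker J^\tr$, so $x\in(\IM\begin{bmatrix}E&R&J\end{bmatrix})^{\perp}=\{0\}$ by Assumption~\ref{ass:overall-ivp}(b). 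Hence $\det\big(sM-(J-R)\big)$ is not the zero polynomial and the pencil $sE-(J-R)Q=\big(sM-(J-R)\big)Q$ is regular.

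For parts~(b) and~(c), I would first normalise via $\xi=Qx$, turning \eqref{eqn:DAEsystemQ} into $\ddt M\xi=(J-R)\xi+Bu$ with $M=M^\tr\ge0$, while preserving every hypothesis of Assumption~\ref{ass:overall-ivp} (in particular $\IM M=\IM E$, so the rank condition survives, and $Ex=M\xi$). Decomposing $\real^n=\IM M\oplus\ker M$ orthogonally and writing $\xi=(\xi_1,\xi_2)$ accordingly produces a differential block whose leading coefficient $M_{11}$ is positive definite, together with an algebraic block $0=(\text{$\xi_1$-coupling})+(J_{22}-R_{22})\xi_2+(\text{data})$ with $J_{22}$ skew and $R_{22}\ge0$. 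The decisive structural observation is that $\ker(J_{22}-R_{22})=\ker(J_{22}-R_{22})^\tr=\ker R_{22}\cap\ker J_{22}$ — again from skew-symmetry plus $R_{22}\ge0$ — so $J_{22}-R_{22}$ restricts to an isomorphism of the orthogonal complement of that kernel. Splitting $\xi_2=(\eta_1,\eta_2)$ along $\ker(J_{22}-R_{22})$, the algebraic block decouples into one equation determining $\eta_1$ in terms of $\xi_1$ and the data, and a \emph{hidden constraint} involving only $\xi_1$; the component $\eta_2$ does not occur in the algebraic block at all.

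To close the system and bound the index, I would then note that $R$ couples trivially to $\eta_2$ (since $R\ge0$ makes the relevant off-diagonal block vanish on $\ker R_{22}$), whereas the block $C$ of $J$ coupling $\eta_2$ into the differential block has full column rank: otherwise a nonzero kernel vector of $C$, lifted back through both orthogonal decompositions, would lie in $\ker M\cap\ker R\cap\ker J$, which is trivial because $\IM M+\IM R+\IM J=\IM E+\IM R+\IM J=\real^n$ by Assumption~\ref{ass:overall-ivp}(b). Differentiating the hidden constraint once, inserting $\ddt\xi_1$ from the differential block, and inverting the positive definite Schur-type matrix $C^\tr M_{11}^{-1}C$ then determines $\eta_2$; since only a single differentiation of an algebraic equation is needed, the DAE has index at most two, which is part~(c). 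For part~(b), substituting the expressions for $\eta_1$ and $\eta_2$ into the differential block gives a closed linear ODE for $\xi_1$ whose inhomogeneity is in $L^2$ — the only derivative entering is $\ddt\big(Z_1^\tr Z^\tr Q^\tr Bu\big)$, which is square-integrable by Assumption~\ref{ass:overall-ivp}(c) — and whose initial value $\xi_1(0)$ is fixed by $Ex(0)=Ex_0$; the hidden constraint at $t=0$ is exactly the compatibility condition $Z_1^\tr Z^\tr Q^\tr(JQx_0+Bu(0))=0$, so this ODE has a unique solution $\xi_1\in H^1$, whence $\eta_1$, $\eta_2$, and thus $x$, are uniquely determined; one finally checks $Ex=M\xi\in H^1$ and that this $x$ solves \eqref{eqn:DAEsystemQ}.

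Part~(a) is short; I expect the real work, and the main obstacle, to lie in part~(c): performing the two nested orthogonal reductions cleanly, isolating the hidden constraint, and — most delicately — verifying that the smoothness and compatibility requirements generated by the reduction coincide exactly with the conditions on $u$ and $x_0$ stated in Assumption~\ref{ass:overall-ivp}(c) via the matrices $Z$ and $Z_1$. Once the reduced normal form from~(c) is at hand, part~(b) is essentially bookkeeping plus a standard existence and uniqueness argument for linear ODEs.
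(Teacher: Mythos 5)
Your proposal is correct and follows essentially the same strategy as the paper: part (a) is the paper's kernel argument for the pencil (you use a real $\lambda>0$ where the paper takes $\RE\lambda>0$, which changes nothing), and parts (b)--(c) rest on the same two nested decompositions, since your splitting of $\ker M$ along $\ker(J_{22}-R_{22})=\ker J_{22}\cap\ker R_{22}$ is exactly the paper's choice $\IM Z_1=\ker RQZ\cap\ker Z^\tr Q^\tr JQZ$, and your hidden constraint and smoothness requirement reduce, after the substitution $\xi=Qx$, precisely to $Z_1^\tr Z^\tr Q^\tr(JQx+Bu)=0$ and $Z_1^\tr Z^\tr Q^\tr Bu\in H^1$ as in Assumption~\ref{ass:overall-ivp}\,(c). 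The one organizational difference is in how the index-two variable is recovered: the paper introduces a further splitting $(Z_2,Z_2')$ of the differential block so that the coupling matrix $A_{14}$ becomes square and invertible (the constraint then determines $z_1$ directly and no differentiation is needed in the construction), whereas you keep $\xi_1$ whole, differentiate the constraint once, and invert the Schur complement $C^\tr M_{11}^{-1}C$, whose positive definiteness you correctly derive from the rank condition via $\ker M\cap\ker R\cap\ker J=\{0\}$. Both routes are valid; the paper's yields a cleaner explicit solution formula, while yours makes the index count in (c) more transparent.
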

\begin{proof}\
	\begin{enumerate}[(a)]
		\item 
		For $\lambda\in\mathbb C$ with $\RE\lambda >0$, we show that 
		$C:=\lambda E-(J-R)Q$ has a trivial kernel. Let
		$z\in\mathbb C^{n}$ with $Cz=0$. Then
		\[
		0=\RE(z^* Q^\tr C
		z)=\RE(\lambda z^* Q^\tr Ez)-\RE(z^* Q^\tr JQx)+\RE(z^* Q^\tr RQz).
		\]
		Invoking that $J=-J^\tr$,  $Q^\tr E=E^\tr Q\geq0$ and $R=R^\tr \geq0$, we obtain
		\[\begin{aligned}
			\RE(\lambda z^* Q^\tr Ez)=&\,\RE(\lambda)z^* Q^\tr Ez\geq0,\\
			\RE(z^* Q^\tr JQz)=&\,\RE((Qz)^* J(Qz))=0,\\
			\RE(z^* Q^\tr RQz)=&\,\RE((Qz)^* R(Qz))=(Qz)^* R(Qx)\geq0,
		\end{aligned}\]
		which leads to $z^* Q^\tr Ez=0$ and $(Qz)^* R(Qz)=0$. The positive semi-definiteness of $Q^\tr E$ and $R$ implies that $E^\tr Qz=Q^\tr Ez=0$ and $RQz=0$. 
		Using $(\lambda E-(J-R)Q)z=0$, this yields $JQz=0$. 
		Hence, by $R=R^\tr$ and $J=-J^\tr$,
		\[ Qz\in\ker
		\left[\begin{matrix}E&R&J\end{matrix}\right]^\tr.
		\]
		Assumption~\ref{ass:overall-ivp}\,\eqref{ass:overall-ivpb} now gives $Qz=0$. Since $Q$ is invertible, we are led to $z=0$.
		\item By using that the pencil $sE-(J-R)Q$ is regular, uniqueness of solutions of the initial value problem 
		\eqref{eqn:DAEsystemQ} follows from the considerations in \cite[Sec.~1.1]{LamoMarz13}.\\
		It remains to prove that a~solution of \eqref{eqn:DAEsystemQ} exists. 
		Let $Z\in\mathbb{R}^{n\times r}$, $Z_1\in\mathbb{R}^{r\times r_1}$ be as in Assumption~\ref{ass:overall-ivp}\,\eqref{ass:overall-ivpc}. Furthermore, let $Z'\in\mathbb{R}^{n\times (n-r)}$, $Z_1'\in\mathbb{R}^{r\times (r-r_1)}$ be matrices with $\IM Z'=(\IM Z)^\bot$ and $\IM Z_1'=(\IM Z_1)^\bot$. Moreover, we introduce matrices $Z_2\in\mathbb{R}^{(n-r)\times r_1}$, $Z_2'\in\mathbb{R}^{(n-r)\times (n-r-r_1)}$ with full column rank and 
		\[\IM Z_2=\ker Z_1^\tr Z^\tr Q^\tr (J-R)Z',\;\; \IM Z_2'=(\IM Z_2)^\bot.\]
		The construction of $Z,Z',Z_1,Z_1',Z_2,Z_2'$ yields that the columns of the matrix
		\begin{equation}
			V:=\big[Z'Z_2', Z'Z_2,ZZ_1',ZZ_1\big]\label{eq:Tdef}
		\end{equation}
		are linearly independent, and thus, $V,V^\tr Q^\tr\in\mathbb{R}^{n\times n}$ are invertible matrices.
		Then we obtain 
		\begin{equation}V^\tr Q^\tr(sE-(J-R)Q)V=    \begin{bmatrix}
				sE_{11}-A_{11}&sE_{12}-A_{12}&-A_{13}&-A_{14}\\sE_{12}^\tr-A_{21}&sE_{22}-A_{22}&-A_{23}&0\\-A_{31}&-A_{32}&-A_{33}&0\\A_{14}^\tr&0&0&0
			\end{bmatrix},\label{eq:pencrel}
		\end{equation}
		where 
		\begin{align*}
			E_{11}=&\,(Z_2')^\tr(Z')^\tr Q^\tr E Z'Z_2',&E_{12}=&\,(Z_2')^\tr(Z')^\tr Q^\tr E Z'Z_2,\\E_{22}=&\,Z_2^\tr(Z')^\tr Q^\tr E Z'Z_2,&
			A_{11}=&\,(Z_2')^\tr(Z')^\tr Q^\tr (J-R)Q Z'Z_2',\\
			A_{12}=&\,(Z_2')^\tr(Z')^\tr Q^\tr (J-R)Q Z'Z_2,&A_{21}=&\,Z_2^\tr(Z')^\tr Q^\tr (J-R)Q Z'Z_2',\\
			A_{13}=&\,(Z_2')^\tr(Z')^\tr Q^\tr (J-R)Q ZZ_1',&A_{31}=&\,(Z_1)'^\tr Z^\tr Q^\tr (J-R)Q Z'Z_2',\\
			A_{22}=&\,Z_2^\tr(Z')^\tr Q^\tr (J-R)Q Z'Z_2,&A_{33}=&\,(Z_1)'^\tr Z^\tr Q^\tr (J-R)Q ZZ_1',\\
			A_{14}=&\,(Z_2')^\tr(Z')^\tr Q^\tr JQ ZZ_1,\\
			B_{1}=&\,(Z_2')^\tr(Z')^\tr Q^\tr B,&B_{2}=&\,Z_2^\tr (Z')^\tr Q^\tr B,\\
			B_{3}=&\,(Z_1')^\tr Z^\tr Q^\tr B,&B_{4}=&\,Z_1^\tr Z^\tr Q^\tr B.
		\end{align*}
		The construction of the matrices $Z,Z',Z_1,Z_1',Z_2,Z_2'$ yields that
		\begin{itemize}
			\item $\left[\begin{smallmatrix}E_{11}&E_{12}\\E_{12}^\tr&E_{22}\end{smallmatrix}\right]$ is positive definite. In particular, $E_{22}$ is invertible,
			\item $A_{33}$ is invertible, and
			\item $A_{14}$ has full row rank.
		\end{itemize}
		The already proved regularity of $sE-(J-R)Q$, which  implies regularity of the pencil $V^\tr Q^\tr (sE-(J-R)Q)QV$, which yields that $A_{14}$ has moreover full column rank. As a~consequence, $A_{14}$ is square (i.e., $2r_1=n-r$) and invertible.
		\\
		Set  %
		$z_0=(z_{10}^\tr,z_{20}^\tr,z_{30}^\tr,z_{40}^\tr)$, $z_{10}\in\mathbb{R}^{n-r-r_1}$, $z_{20}\in\mathbb{R}^{r_1}$, $z_{30}\in\mathbb{R}^{r-r_1}$, $z_{40}\in\mathbb{R}^{r_1}$. By Assumption~\ref{ass:overall-ivp}\,\eqref{ass:overall-ivpc}, we have $B_4u\in H^1([0,T];\mathbb{R}^{r_1})$ and
		\begin{multline*}
			0= Z_1^\tr Z^\tr Q^\tr (JQx_0+Bu(0))
			=Z_1^\tr Z^\tr Q^\tr JQx_0+B_4u(0)\\
			=Z_1^\tr Z^\tr Q^\tr JQ(Z'Z_2'z_{10}+Z'Z_2z_{20}+ZZ_1'z_{30}+ZZ_1z_{40})+B_4u(0)\\
			=Z_1^\tr Z^\tr Q^\tr JQZ'Z_2'z_{10}+B_4u(0)=-Z_1^\tr Z^\tr Q^\tr J^\tr QZ'Z_2'z_{10}+B_4u(0)\\=-A_{14}^\tr z_{10}+B_4u(0).
		\end{multline*}
		Consequently, $z_1=(A_{14}^\tr)^{-1}B_4u$ fulfills $z_1\in H^1([0,T];\mathbb{R}^r)$, $-A_{14}^\tr z_1+B_4u(t)=0$ and $z_1(0)=z_{10}$. Moreover, let $z_2\in H^1([0,T];\mathbb{R}^{r_1})$ be the solution of the following ODE
		with initial value $z_2(0)=z_{20}$
		\begin{multline*}
			\ddt z_2(t)=E_{22}^{-1}(A_{22}-A_{23}A_{33}^{-1}A_{32})z_2(t)\\
			\quad 
			+E_{22}^{-1}\big(-\ddt E_{12}^\tr z_1(t)+A_{21}z_1(t)+B_2u(t)-A_{23}A_{33}^{-1}z_1(t)-A_{23}A_{33}^{-1}B_3u(t)\big),
		\end{multline*}
		and we successively define $z_3\in L^2([0,T];\mathbb{R}^{r-r_1})$, $z_4\in L^2([0,T];\mathbb{R}^{r_1})$ by
		\begin{align*}
			z_3(t)=&\,-A_{33}^{-1}A_{23}z_2(t)-A_{33}^{-1}A_{13}z_1(t),\\
			z_4(t)=&\,-A_{14}^{-1}\big(\ddt E_{11}z_1(t)+\ddt E_{12}z_2(t)-A_{11}z_1(t)-A_{12}z_1(t)-A_{13}z_3(t)\big).
		\end{align*}
		Altogether, we have that $z:=(z_1^\tr,z_2^\tr,z_3^\tr,z_4^\tr)^\tr$ is a~solution of the differential-algebraic equation
		\begin{equation}
			\begin{aligned}
				\ddt    \begin{bmatrix}
					E_{11}&E_{12}&0&0\\E_{12}^\tr&E_{22}&0&0\\0&0&0&0\\0&0&0&0
				\end{bmatrix}
				\begin{pmatrix}
					z_{1}(t)\\z_{2}(t)\\z_3(t)\\z_4(t)
				\end{pmatrix}=&
				\begin{bmatrix}
					A_{11}&A_{12}&A_{13}&A_{14}\\A_{21}&A_{22}&A_{23}&0\\A_{31}&A_{32}&A_{33}&0\\-A_{14}^\tr&0&0&0
				\end{bmatrix}
				\begin{pmatrix}
					z_{1}(t)\\z_{2}(t)\\z_3(t)\\z_4(t)
				\end{pmatrix}+ \begin{bmatrix}
					B_{1}\\B_{2}\\B_{3}\\B_{4}
				\end{bmatrix}u(t),\\
				\begin{bmatrix}
					E_{11}&E_{12}&0&0\\E_{12}^\tr&E_{22}&0&0\\0&0&0&0\\0&0&0&0
				\end{bmatrix}
				\begin{pmatrix}
					z_{1}(0)\\z_{2}(0)\\z_3(0)\\z_4(0)
				\end{pmatrix}=&\begin{bmatrix}
					E_{11}&E_{12}&0&0\\E_{12}^\tr&E_{22}&0&0\\0&0&0&0\\0&0&0&0
				\end{bmatrix}
				\begin{pmatrix}
					z_{10}\\z_{20}\\z_{30}\\z_{40}
				\end{pmatrix}.\label{eq:daerel}
			\end{aligned}
		\end{equation}
		Now setting
		\[x(t):=(Q^\tr)^{-1}(V^\tr)^{-1}z(t),\]
		we obtain from \eqref{eq:pencrel} and \eqref{eq:daerel} that $x$ is a~solution of \eqref{eqn:DAEsystemQ}.
				\item The result has been shown in \cite[Thm.~4.3]{Mehl_2018} and \cite[Thm.~6.6]{GeHaRe20}. 
				 Notice, this can be also seen from the proof of (b). 
		\qedhere
	\end{enumerate}
\end{proof}
\begin{remark}
	Assume that the DAE \eqref{eqn:DAEsystemQ} fulfills Assumption~\ref{ass:overall-ivp}\,\eqref{ass:overall-ivpa} \& \eqref{ass:overall-ivpb}. By using
	$V\in\mathbb{R}^{n\times n}$ as defined as in \eqref{eq:Tdef}, we can transform \eqref{eqn:DAEsystemQ} to an equivalent DAE \eqref{eq:daerel}. Consequently, the existence of a~solution of \eqref{eqn:DAEsystemQ} implies $z_1\in H^1([0,T];\mathbb{R}^{r_1})$ with, and thus
	$B_4u=(A_{14}^\tr)^{-1}\in H^1([0,T];\mathbb{R}^{r_1})$. Invoking that $B_4=Z_1^\tr Z^\tr Q^\tr B$, we obtain that
	$Z_1^\tr Z^\tr Q^\tr Bu\in H^1([0,T];\mathbb{R}^{r_1})$ with $Z_1^\tr Z^\tr Q^\tr (JQx_0+Bu(0))=0$ has to be fulfilled. Consequently, under Assumption~\ref{ass:overall-ivp} \eqref{ass:overall-ivpa} \& \eqref{ass:overall-ivpb}, the conditions in Assumption~\ref{ass:overall-ivp}\,\eqref{ass:overall-ivpc} are also necessary for the existence of a~solution of the DAE \eqref{eqn:DAEsystemQ}.
	\hfill $\Box$
\end{remark}

\subsection{Perspective as coupled PH-subsystems}
\label{sec:coupled-ph-subsystems}
We take the perspective of the paper~\cite{Guenther2021} for the system \eqref{eqn:DAEsystemQ} and its partitioning given in Rem.~\ref{remark:partitioned-PHS}. To this end, we need to introduce the internal inputs $u_{ij}$, i.e., data stemming from the $j$th system being input for the $i$th system, and outputs $y_{ij}$ (i.e., data from the $i$ system to be transferred to the $j$th system).
This read for the $i$th subsystems ($i=1,\ldots,s$):
\begin{align*}
	\ddt E_i \varsplit_i(t) &= (J_i - R_i ) Q_i \varsplit_i(t)  
	+  \sum_{\begin{smallmatrix}j=1\\ j\neq i\end{smallmatrix}}^{s} B_{ij} {u}_{ij} (t)
	+ B_i u_i (t), \qquad E_ix_i(0)=Ex_{i0},
	\\
	y_{ij} (t) & =  B_{ij}^\tr Q_i \varsplit_i(t),\qquad j=1,\ldots,s,\, j\neq i, \\
	y_i (t) &= B_i^\tr Q_i \varsplit_i(t)
	\nonumber
\end{align*}
with matrices $E_i,Q_i,J_i,R_i \in \mathbb{R}^{n_i\times n_i}$, and $B_i\in \mathbb{R}^{n_i\times m}$ fulfilling the properties of  Assumption~\ref{ass:overall-ivp}\,\eqref{ass:overall-ivpa}, and $B_{ij}\in \mathbb{R}^{n_i\times m_{ij}}$ for $i=1,\ldots,s$, $j=1,\ldots,s$, $i\neq j$ with $m_{ij}=m_{ji}$.
The dissipation inequality for this subsystem reads
\begin{multline*}
	\tfrac12\big(x_i(t)^\tr Q_i^\tr E_ix_i(t)\big)-\tfrac12\big(x_i(0)^\tr Q_i^\tr E_ix_i(0)\big)\\\leq \int_0^tu_i(\tau)^\tr y_i(\tau)d\tau+\sum_{\begin{smallmatrix}j=1\\j\neq i\end{smallmatrix}}^s\int_0^tu_{ij}(\tau)^\tr y_{ij}(\tau)d\tau\qquad \forall\, t\in[0,T]
\end{multline*}
The overall input is $u=u_1=\ldots=u_s$, whereas the overall output is given by the sum
\[y=\sum_{j=1}^sy_i.\]
Coupling is done such that $x\mapsto x^\tr Q^\tr Ex$ with $E$ and $Q$ as in \eqref{eq:EQBprop} is a~storage function for the overall system \cite{CervScha07}. This is for instance achieved by 
\begin{equation*}
	\begin{aligned}
		\forall\,i=1,\ldots,s:\;&   u_{ij} + y_{ji} =0,\qquad j=1,\ldots,i-1,\\
		&   u_{ji} - y_{ij} =0,\qquad j=i+1,\ldots,s,
	\end{aligned}
\end{equation*}
since
\begin{align*}
	&\,\tfrac12\big(x(t)^\tr Q^\tr Ex(t)\big)-\tfrac12\big(x(0)^\tr Q^\tr Ex(0)\big)\\
	=&\,\sum_{i=1}^s\tfrac12\Big(\big(x_i(t)^\tr Q_i^\tr E_ix_i(t)\big)-\tfrac12\big(x_i(0)^\tr Q_i^\tr E_ix_i(0)\big)\Big)\\
	\leq& 
	 \sum_{i=1}^s\int_0^tu_i(\tau)^\tr y_i(\tau)d\tau+\sum_{i=1}^s\sum_{\begin{smallmatrix}j=1\\j\neq i\end{smallmatrix}}^s\int_0^tu_{ij}(\tau)^\tr y_{ij}(\tau)d\tau
	\\
	=& \int_0^t \!\! u(\tau)^{\!\tr} \! \sum_{i=1}^s y_i(\tau)d\tau
	  \!+\! \sum_{i=1}^s \sum_{j=1}^{i-1}\! \int_0^t \!\! u_{ij}(\tau)^{\!\tr}\! y_{ij}(\tau)d\tau
    	\!+\!\sum_{i=1}^s \sum_{j=i+1}^{s} \! \int_0^t \!\! u_{ij}(\tau)^{\!\tr} y_{ij}(\tau)d\tau
	\\
	=& \int_0^t \!\! u(\tau)^{\!\tr}\!  y(\tau) d\tau 
	+ \sum_{i=1}^s\sum_{j=1}^{i-1} \!\int_0^t \!\!\! -y_{ji}(\tau)^{\!\tr} u_{ji}(\tau)d\tau
	+\sum_{i=1}^s\sum_{j=i+1}^{s} \! \int_0^t \!\! u_{ij}(\tau)^{\!\tr}\! y_{ij}(\tau)d\tau
	\\
	=&\, \int_0^tu(\tau)^\tr y(\tau)d\tau.\end{align*}
The overall system is given by \eqref{eqn:DAEsystemQ} with $E$, $Q$ and $R$ as in \eqref{eq:EQBprop}, and $J$ structured as in 
\eqref{eq:EQBprop2} with
\begin{align*}
	\forall\, i=1,\ldots,s-1,\;j=i+1,\ldots, s:&\quad J_{ij}=B_{ij} B_{ji}^\tr.
\end{align*}

\begin{remark}
	This coupling has the drawback that all variables enter the definition of the coupling. And we have as many outputs of a subsystem as the other subsystem has variables times the number of other subsystems. This will be treated differently in practice. A 'sparse' coupling has to be introduced.
\end{remark}

Before we investigate the Lions/Mercier based algorithm, we revisit the Jacobi dynamic iteration and discuss some limitations.

\section{Failure of Jacobi dynamic iteration}
\label{chapter.failure}
To motivate the development of monotone operator splitting based dynamic iteration schemes in chapter~\ref{sec:dynamic-iteration-scheme}, we remind that classical approaches such as the Jacobi or Gauss-Seidel iteration scheme may not always be feasible for a computer implementation. 
To this end, we address system~\eqref{eqn:DAEsystemQ} in the following ODE setting: assigning $E=Q=I$, the  initial value problem~\eqref{eqn:DAEsystemQ} reads
\begin{align}
\label{eq:ivp-numerics}
    \dot x & = (J-R) x + B u(t), \quad x(0)=x_0, \quad t \in [0,T].
\end{align}
For this system, we recall that the dynamic iteration is based on a  splitting $J-R=M-N$ and an iteration count $k\in \nats_0$. Now, let be an initial guess $k=0$ be given $x^{(0)} \in C^{0}([0,T], \real^n)$ with $x^{(0)}(0)=x_0$ (e.g., $x^{(0)}(t)=x_0$) for all $t\in[0,T]$, the dynamic iteration scheme reads:
\begin{align}
\label{jac.it.ode}
    \dot x^{(k+1)} & = M x^{(k+1)} - N x^{(k)} + B u(t), \quad x^{(k+1)}(0)=x_0.
\end{align}
\begin{remark}
Note that setting $M=J_d-R$ and $N=-J_o$ in the case of the pHS-ODE system~\eqref{eq:ivp-numerics} defines a block-Jacobi dynamic iteration scheme.
\hfill $\Box$
\end{remark}

If we restrict to $u   \in C^{0}([0,T],\real^m)$, we obtain from \eqref{jac.it.ode} $x^{(k+1)} \in C^{0}([0,T],\real^n)$. Let $x \in C^{0}([0,T],\real^n)$ denote the analytic solution of \eqref{eq:ivp-numerics}.
Then, the recursion error is given by $\epsilon^{(k)}:=x^{(k)}-x$. 
Following Burrage~\cite{Burrage1995}, we can derive an exact error recursion for the dynamic iteration~\eqref{jac.it.ode}. Doing this, for the system~\eqref{eq:ivp-numerics} with 
\begin{equation}\label{eq:restriction-for-error-recursion}
  R=\diag(\tau,\dotsc,\tau), \quad B=0, 
\end{equation}
and setting $M=-R, N=-J_o=-J$, we get the error recursion on
$t \in [0,\, T]$: 
\begin{align*}
&\epsilon^{(k)}\!(t) =  
    \int_0^t \!\! e^{-(t-s_{k\!-\!1}) R } J \epsilon^{(k-1)}(s_{k\!-\!1}) 
  \diff s_{k-1} 
    = J \!\!\int_0^t \!\! e^{-\tau (t-s_{k\!-\!1}) }  \epsilon^{(k-1)}(s_{k-1}) \diff s_{k-1}
\\[0.5ex]
	&  = %
        J^{k} \!\!		
		\int_{0}^{t} \!\! e^{-\tau (t-s_{k-1}) } \!\!
		\int_{0}^{s_{k\!-\!1}}  \!\!\! e^{-\tau (s_{k\!-\!1}\!-\!s_{k\!-\!2}) } \!\!
		    \dotsc  \!
		    \int_0^{s_1} \!\!\!e^{-\tau (s_1\!-\!s_0) } \epsilon^{(0)} (s_0) \diff s_0  \dotsc \diff s_{k-2} \diff s_{k-1} 
		\\[0.5ex]
	&  =  e^{-\tau t} %
						J^{k} 
			\int_{0}^{t}  \int_{0}^{s_{k-1}}  \dotsi  \int_0^{s_1} e^{\tau  s_0 } \epsilon^{(0)} (s_0) \diff s_0  \dotsi \diff s_{k-2} \diff s_{k-1} .
\end{align*}
Now, just for demonstration purposes and simplicity, let us assume that initial error has the format: 
\[ \epsilon^{(0)} (s_0) = D   s_0 e^{-\tau s_0} \ones \; \text{ with } \ones=[1,\, 1]^{\tr}\]
and $D>0$. 
Using the maximum norm $\|f\|_T:=\max_{t \in [0,T]} \|f(t)\|_2$ for $f\in C^{0}([0,T],\real^n)$, the error reads for all $k\ge \tau T -1$ (maximal error at $T$): 
	\[
		\|\epsilon^{(k)}\|_T = \sqrt{2} \, D \|J\|_2^k    
		                       \frac{e^{-\tau T} T^{k+1}}{(k+1)!}  
		\quad
		\Rightarrow
		\quad
		\frac{\|\epsilon^{(k+1)}\|_T }{\|\epsilon^{(k)}\|_T}
		 = \frac{\|J\|_2 \cdot T}{k+2}   
		 .
	\]

\begin{example}
We analyse a $2\times 2$ version of \eqref{eq:ivp-numerics} with the restriction \eqref{eq:restriction-for-error-recursion}, which reads:
\begin{align}\label{eq:system-simple}
 x'=(J-R)x, \quad x(0)=x_0=\begin{bmatrix}
     2 \\2
 \end{bmatrix}, \qquad  \text{with } 
J=\begin{bmatrix}
            0 & \nu \\
            -\nu & 0
   \end{bmatrix}\!, \;
     R = \begin{bmatrix}
        \tau & 0 \\
            0 & \tau
   \end{bmatrix}
   \end{align}
and employing $\tau, \nu >0$. The analytic solution
and error evolution are given by:
\[
  x(t) = e^{-\tau t} \begin{bmatrix}
   \cos(\nu t) & \sin(\nu t) \\
   -\sin(\nu t) & \cos(\nu t)
  \end{bmatrix} x_0 
 , \qquad 
		\frac{\|\epsilon^{(k+1)}\|_T }{\|\epsilon^{(k)}\|_T}
	  = \frac{\nu \, T}{k+2}   \quad \forall k\ge \tau T -1 .
\]
	The error amplification factor $\nu T/(k+2)$ is bounded by one for all $k>\nu T -2$. For all $k < \nu T -2$ the error is increasing. 
	Hence, the convergence is monotone (for all $k$) only if $\nu T -2   \le 0$. 
	We give results for both cases:
\begin{enumerate}[(a)]
	    \item For $\nu T >2$, we obtain a decrease of the error only if $k$ is large enough. On the other hand, we may %
	    trigger overflows if $\nu T$ is large enough, see 
	    Fig.~\ref{fig:error-jacobi}, left.

	   \item If $\nu T \le 2$, we have monotone convergence. Hence windowing is always a solution to obtain (good) convergence. 

        An example for $T=0.5$ is shown in Fig.~\ref{fig:error-jacobi}, right. Thus using Jacobi with windows, we can obtain convergences. Note that the behavior in the  window $[0, 0.5]$ yields a kind of worst case for the convergence. 
        \hfill $\Box$
	\end{enumerate}
\end{example}

\begin{figure}
    \centering
    \includegraphics[width=0.485\textwidth]{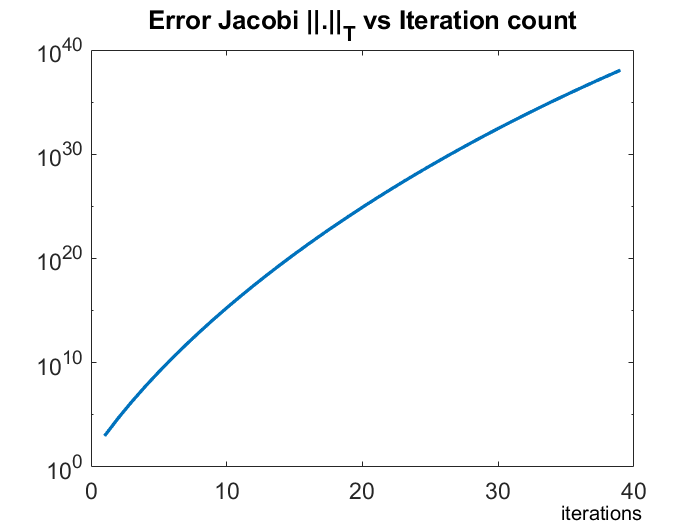}
    \hspace*{1ex}
    \includegraphics[width=0.485\textwidth]{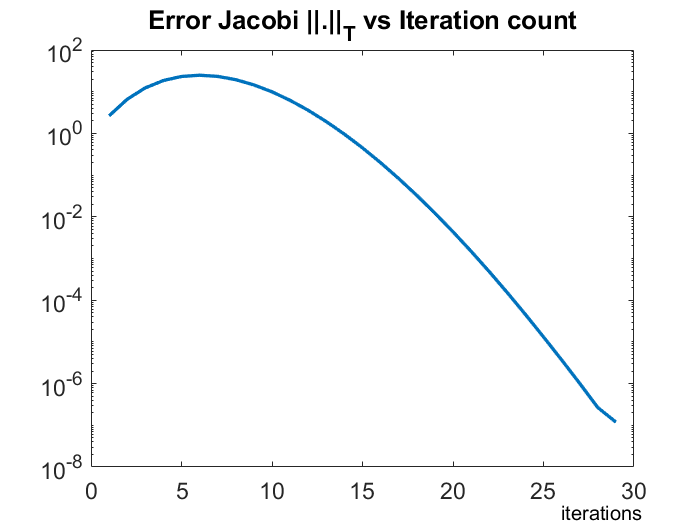}
        
    \caption{Jacobi iteration for system~\eqref{eq:system-simple} with $\nu=15$, $\tau=0.01$, $D=1$. Left: $T=10$. The maximum error would be obtained at iteration $k=148$:  $\sqrt{2} \cdot  150^{148}/148!$ and error decrease from $k=149$ on. Overflow in single precision at $k=40$.\newline
    Right: $T=0.5$. Maximum error for $k=5$ ($\nu T -2=5.5$), then decrease. After $k=29$, we achieve the solution (in single precision).}
    \label{fig:error-jacobi}
\end{figure}

\section{Recap on maximal monotone operators}
For $\omega\ge0$, we introduce the weighted $L^2$-space
$$ 
    L^2_\omega([0,T],\mathbb R^n):= 
        \{f:[0,T]\rightarrow \mathbb R^n \mid t\mapsto e^{-t \omega} f(t) \in  L^2([0,T],\mathbb R^n)\}
$$
equipped with the norm $\| \cdot \|_{2,\omega}$ defined as
$$ 
   \|f\|_{2,\omega}^2= 
  \int_0^T e^{-2t \omega} \| f(t)\|^2dt
  .
$$
Clearly, as  $T\in (0,\infty)$, we have $L^2_\omega([0,T],\mathbb R^n)=L^2([0,T],\mathbb R^n)$ with equivalent norms. Furthermore, if $\omega=0$, then the norms in $L^2_\omega([0,T],\mathbb R^n)$ and $L^2([0,T],\mathbb R^n)$ coincide, i.e., 
\[\forall\,f\in  L^2([0,T],\mathbb R^n):\quad \|f\|_{2,0}=\|f\|_{2}= \left(\int_0^T  \| f(t)\|^2dt\right)^{1/2}.\]

\begin{definition}[Contraction]
Let $X$ be a Banach space space with norm $\| \cdot\|$. A (possibly nonlinear) operator $A:D(A)\subset X\to X$ is called {\em contractive}, if
\[
   \|Ax-Ay\|\leq \|x-y\|, \qquad \forall\, x,y\in X,
\]
and {\em strictly contractive}, if there exists some $L<1$, such that
\begin{equation*}
   \|Ax-Ay\|\leq L\,\|x-y\|, \qquad \forall\, x,y\in X .
   \tag*{$\Box$}
\end{equation*}
\end{definition}

\begin{definition}[Maximally monotone operator]
Let $X$ be a real Hilbert space with inner product $\langle \cdot, \cdot\rangle$. A set $Y\subset X\times X$ is called {\em monotone}, if
\[
   \langle x-u,y-v \rangle\ge 0, \qquad (x,y),(u,v)\in Y.
\]
Further, $Y\subset X\times X$ is called  {\em maximally monotone}, if it is monotone and not a proper subset of a monotone subset of $X\times X$.\\
A (possibly nonlinear) operator $A:D(A)\subset X\rightarrow X$ is called {\em (maximally) monotone}, if the graph of $A$, i.e., $\{(x,Ax):\, x\in D(A)\}$, is (maximally) monotone. 
\hfill $\Box$
\end{definition}

\begin{remark}\label{rem:inv}
Let $A:D(A)\subset X\rightarrow X$ be a~monotone operator. 
It follows from the definition of monotonicity that $I+\lambda A$ is injective for all $\lambda>0$.\\
Moreover, by \cite[Theorem 2.2]{Bar10}, the following three statements are equivalent: 
\begin{enumerate}[(i)]
\item $A$ is maximally monotone,
\item $I+\lambda A$ is surjective for some $\lambda>0$,
\item $I+\lambda A$ is surjective for all $\lambda>0$.
\end{enumerate}
Consequently, if $A$ is maximally monotone, then $I+\lambda A$ is bijective for all $\lambda>0$. The Cauchy-Schwarz inequality yields that
\[\|( I+\lambda A)x-( I+\lambda A)y\|\geq \|x-y\|\quad \forall x,y\in D(A),\]
whence $( I+\lambda A)^{-1}:X\rightarrow X$ is contractive. %

Furthermore, $(I-\lambda A)(I+\lambda A)^{-1}$ is contractive. 
 This follows with $\tilde x:= (I+\lambda A)^{-1}x$ and $\tilde y:= (I+\lambda A)^{-1}y$ from 
\begin{align*}
\|x-y\|^2-\|(I-\lambda A)&(I+\lambda A)^{-1}x -(I-\lambda A)(I+\lambda A)^{-1}y\|^2\\
& =\|\tilde x-\tilde y+\lambda A\tilde x-\lambda A\tilde y\|^2-\|\tilde x-\tilde y-(\lambda A\tilde x-\lambda A \tilde y)\|^2\\
& =4\lambda\langle \tilde x-\tilde y,A\tilde x-A\tilde y\rangle\ge0.
\tag*{$\Box$}
\end{align*}
\end{remark}

\section{Dynamic iteration scheme}
\label{sec:dynamic-iteration-scheme}

Now we develop a dynamic iteration scheme for the DAE \eqref{eqn:DAEsystemQ} fulfilling Assumption~\ref{ass:overall-ivp}.
To facilitate the decoupling, we split %
$J$ as follows
\begin{equation}
    J =\,\underbrace{\begin{bmatrix}
J_1&0&\cdots&0\\
0&\ddots&\ddots&\vdots\\
\vdots&\ddots&\ddots&0\\
0&\cdots&0 &J_s\end{bmatrix}}_{=:J_d}+\,\underbrace{\begin{bmatrix}
0&J_{12}&\cdots&J_{1s}\\
-J_{12}^\tr&\ddots&\ddots&\vdots\\
\vdots&\ddots&\ddots&J_{s-1,s}\\
-J_{1s}^\tr&\cdots&-J_{s-1,s}^\tr &0\end{bmatrix}}_{=:J_o}. \label{eq:Jsplit}
\end{equation}
such that $J_d = - J_d^\tr$ and $J_o=-J_o^\tr$ (compare with Rem.~\ref{remark:partitioned-PHS} \ref{remark:partitioned-PHS-subsystems}).

Let $\alpha\in [0,1]$, $\mu,\omega\in\mathbb{R}$ with $0\leq\mu\leq \omega$. We introduce 
the affine-linear operator $M$ by:
\begin{subequations}\label{eq:Mop}
\begin{alignat}{2}
M\!:&&\,D(M)\subset L^2_\omega([0,T],\mathbb R^n)&\rightarrow L^2_\omega([0,T],\mathbb R^n),\\
&&x &\mapsto\ddt (E{x}) -(J_d-\alpha R 
    +\mu EQ^{-1})Q x \!-\!B u
\end{alignat}
{with domain}
\begin{equation} D(M) = \left\{ x\in L^2_\omega([0,T],\mathbb R^{n})\mid
 Ex\in H^1([0,T],\mathbb R^{n}), E
 x(0)=Ex_{0}\right\},
 \end{equation}
\end{subequations}
 and the linear bounded multiplication operator $N$ by
 \begin{subequations}\label{eq:Nop}
\begin{align}
N\!:&& L^2_\omega([0,T],\mathbb R^n)&\,\rightarrow L^2_\omega([0,T],\mathbb R^n),\\
&&    x&\,\mapsto
        ((1-\alpha)R+\mu EQ^{-1}-J_o) Qx.
 \end{align}
\end{subequations}

\begin{remark}\label{rem:splitting}
Notice that $x$ is a solution of the DAE~\eqref{eqn:DAEsystemQ}  if, and only if,
\begin{equation}\label{equationQbalg}
   Mx + Nx = 0
\end{equation}
holds. \mbox{}\hfill $\Box$
\end{remark}

  \begin{lemma}\label{lem:QM0}
 Let $\omega,\mu\in\mathbb{R}$ with $0\leq\mu\leq\omega$, and let $\alpha\in[0,1]$. Assume that the DAE \eqref{eqn:DAEsystemQ} fulfills Assumption~\ref{ass:overall-ivp}, and let the operator $M$ be defined as in \eqref{eq:Mop}.  Then $MQ^{-1}$ is maximally monotone %
 on $L^2_\omega([0,T],\mathbb R^n)$. More precisely, $MQ^{-1}$ satisfies
 \begin{align*}
     \langle MQ^{-1} v & - MQ^{-1} w,\;  v-w \rangle_{2,\omega} 
     \\
   &  =
      \tfrac{1}{2} e^{-2 T\omega }\left(   
                                  \|(EQ^{-1})^{1/2}(v(T)-w(T))\|^2 
                                \right) 
      \\
      &\qquad (\omega-\mu) \|(EQ^{-1})^{1/2}(v-w)\|^2_{2,\omega} +\alpha\| R^{1/2}(v-w)\|^2_{2,\omega}\ge0
 \end{align*} 
 for $v, w\in Q D(M)$, and $I+\lambda MQ^{-1}$ is surjective for every $\lambda>0$. 
 \end{lemma}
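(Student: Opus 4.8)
The plan is to verify the claimed identity for $\langle MQ^{-1}v - MQ^{-1}w, v-w\rangle_{2,\omega}$ by a direct computation, and then deduce maximal monotonicity from the surjectivity of $I+\lambda MQ^{-1}$ via the characterization recalled in Remark~\ref{rem:inv}. First I would exploit affine-linearity: writing $v = Qx$, $w = Q\tilde x$ with $x,\tilde x \in D(M)$ and setting $\delta = x - \tilde x$, the difference $MQ^{-1}v - MQ^{-1}w$ equals $\ddt(E\delta) - (J_d - \alpha R + \mu EQ^{-1})Q\delta$ — the inhomogeneous term $Bu$ cancels, and crucially $E\delta \in H^1$ with $E\delta(0) = 0$. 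So the problem reduces to computing $\int_0^T e^{-2t\omega}\langle \ddt(E\delta) - (J_d - \alpha R + \mu EQ^{-1})Q\delta,\ Q\delta\rangle\,dt$.

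The core of the calculation is the term $\int_0^T e^{-2t\omega}\langle \ddt(E\delta), Q\delta\rangle\,dt$. Here I would invoke Proposition~\ref{prop:PHS-structure}(a) applied to $\delta$ (which is legitimate since $E\delta \in H^1$ and $E^\tr Q = Q^\tr E \ge 0$): it gives $\ddt(\delta^\tr Q^\tr E\delta) = 2\delta^\tr Q^\tr \ddt(E\delta)$, so $\langle \ddt(E\delta), Q\delta\rangle = \langle Q\delta, \ddt(E\delta)\rangle = \tfrac12\ddt(\delta^\tr Q^\tr E\delta) = \tfrac12 \ddt\|(EQ^{-1})^{1/2}Q\delta\|^2$, using the matrix square root from Remark~\ref{remark:partitioned-PHS}(b) and $\delta^\tr Q^\tr E\delta = (Q\delta)^\tr(EQ^{-1})(Q\delta)$. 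Then integration by parts against the weight $e^{-2t\omega}$, with the boundary term at $t=0$ vanishing because $E\delta(0)=0$, produces exactly $\tfrac12 e^{-2T\omega}\|(EQ^{-1})^{1/2}Q\delta(T)\|^2 + \omega\int_0^T e^{-2t\omega}\|(EQ^{-1})^{1/2}Q\delta\|^2\,dt$. Meanwhile the algebraic terms are routine: $\langle -J_d Q\delta, Q\delta\rangle = 0$ by skew-symmetry of $J_d$, $\langle \alpha R Q\delta, Q\delta\rangle = \alpha\|R^{1/2}Q\delta\|^2$, and $\langle -\mu EQ^{-1}Q\delta, Q\delta\rangle = -\mu\|(EQ^{-1})^{1/2}Q\delta\|^2$. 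Collecting everything and recognizing that $\|(EQ^{-1})^{1/2}(v-w)\|^2_{2,\omega} = \int_0^T e^{-2t\omega}\|(EQ^{-1})^{1/2}Q\delta\|^2\,dt$ gives the stated identity; nonnegativity is then immediate from $\omega \ge \mu$, $\alpha \ge 0$, and positive semidefiniteness of $EQ^{-1}$ and $R$.

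For surjectivity of $I + \lambda MQ^{-1}$, given $g \in L^2_\omega([0,T],\mathbb R^n)$ I need to solve $v + \lambda MQ^{-1}v = g$; writing $v = Qx$ this is the DAE $\lambda\ddt(Ex) - \lambda(J_d - \alpha R + \mu EQ^{-1})Qx + Qx = g + \lambda Bu$ with $Ex(0) = Ex_0$. I would argue that this system is again of the form covered by Proposition~\ref{prop:DAEbasicprop}: the matrix coefficient structure is $\ddt(Ex) = (J_d - (\alpha R + \tfrac1\lambda I) + \mu EQ^{-1})Qx + \text{(forcing)}$ — wait, the $\tfrac1\lambda I$ term and $\mu EQ^{-1}$ term must be reconciled with the $(J-R)Q$ template; the cleaner route is to check directly that $\tilde E := \lambda E$, $\tilde J := \lambda J_d$, $\tilde R := \lambda\alpha R + I - \lambda\mu E Q^{-1}\cdot(\text{as a }Q\text{-weighted term})$ still satisfy the port-Hamiltonian sign conditions and the rank condition of Assumption~\ref{ass:overall-ivp}, so that existence of a solution in the weak sense follows from Proposition~\ref{prop:DAEbasicprop}(b). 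The main obstacle I anticipate is precisely this last point: verifying that the shifted/rescaled system still meets the regularity hypothesis $\mathrm{rk}[\tilde E\ \tilde R\ \tilde J] = n$ and the consistency conditions on the forcing term $g + \lambda Bu$ (which now need not lie in the nice subspace required by Assumption~\ref{ass:overall-ivp}(c)) — one likely needs the extra coercive term $\tfrac1\lambda I$ in the "$R$"-slot to kill the index-2 part and relax the consistency requirement, and making that rigorous, rather than the monotonicity identity itself, is where the real work lies.
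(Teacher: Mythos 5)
Your computation of the monotonicity identity is correct and coincides with the paper's: the affine part $Bu$ cancels in differences, the $J_d$-term drops by skew-symmetry, and the weighted integration by parts of $\langle \ddt(E\delta),Q\delta\rangle_{2,\omega}$ (justified via Proposition~\ref{prop:PHS-structure}(a) and $E\delta(0)=0$) produces exactly the boundary term and the $(\omega-\mu)$-term. No issues there.

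The surjectivity argument, however, has a genuine gap that you yourself flag but do not close: the treatment of the $\mu EQ^{-1}$ term. Rewriting $(I+\lambda MQ^{-1})v=g$ as a DAE for $v$ puts $\alpha R+\tfrac1\lambda I-\mu EQ^{-1}$ into the ``$R$-slot'', and this matrix need \emph{not} be positive semidefinite (the $\mu$-shift is antidissipative), so Proposition~\ref{prop:DAEbasicprop} does not apply directly for arbitrary $\lambda>0$ as you set it up. The paper's resolution is a change of variables you are missing: substitute $v(t)=e^{\mu t}\tilde v(t)$, so that the product rule generates the term $\mu EQ^{-1}v$ automatically and $\tilde v$ solves a DAE with data $\tilde E=EQ^{-1}$, $\tilde Q=I$, $\tilde R=\alpha R+\tfrac1\lambda I$ and forcing $e^{-\mu t}(Bu-\tfrac1\lambda g)$; since $\tilde R>0$, Assumption~\ref{ass:overall-ivp}\,(b) holds trivially and the subspace $\IM Z_1$ in Assumption~\ref{ass:overall-ivp}\,(c) is $\{0\}$, so the consistency conditions you worry about are vacuous and Proposition~\ref{prop:DAEbasicprop} gives the solution --- this is precisely the mechanism you correctly guessed (``the coercive term $\tfrac1\lambda I$ kills the index-2 part'') but left unverified. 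An alternative repair within your own setup: Remark~\ref{rem:inv} only requires surjectivity of $I+\lambda MQ^{-1}$ for \emph{some} $\lambda>0$ to conclude maximal monotonicity (and then surjectivity for all $\lambda$), so it would suffice to take $\lambda$ small enough that $\tfrac1\lambda I-\mu EQ^{-1}\ge 0$; but as written, your proposal establishes neither route.
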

 \begin{proof}
  First of all, we have  for $v, w\in Q D(M)$
 \begin{align*}
  &\langle MQ^{-1} v - MQ^{-1} w,v-w\rangle_{2,\omega}
  \\
  &=  \langle \ddt EQ^{-1}(v-w),\, v-w\rangle_{2,\omega} 
     -\langle (J_d-\alpha R +\mu EQ^{-1})(v-w), v-w\rangle_{2,\omega}
     \\
  & = \langle \ddt EQ^{-1}(v\!-\!w), v-w\rangle_{2,\omega}
     - \mu \langle (EQ^{-1}(v\!-\!w),
              v\!-\!w\rangle_{2,\omega}
     + \langle \alpha R(v\!-\!w), v\!-\! w\rangle_{2,\omega}
     \\
   &=\tfrac{1}{2} e^{-2 T\omega }\left(\|(EQ^{-1})^{1/2}(v(T)-w(T))\|^2 \right) 
     \\
   &\qquad+ (\omega-\mu) \|(EQ^{-1})^{1/2}(v-w)\|^2_{2,\omega}+\langle \alpha R(v-w), v-w\rangle_{2,\omega}\ge0,
  \end{align*}
which shows that $MQ^{-1}$ is monotone.\\
To prove that $MQ^{-1}$ is maximally monotone, it suffices, by Remark~\ref{rem:inv}, to prove that $I+\lambda MQ^{-1}$ is surjective for some $\lambda>0$. Assume that $y\in L^2_\omega([0,T],\mathbb R^n)$. Consider the matrices $\tilde{E}=EQ^{-1}$, $\tilde{R}=\alpha R
          +\frac1\lambda I$ and $\tilde{Q}=I$. Then the DAE
\begin{equation} \ddt \tilde{E}\tilde{v}(t) =
(J_d-\tilde{R})\tilde{Q}\tilde{v}(t) +B e^{-\mu t}u(t)-\tfrac1\lambda e^{-\mu t} y(t),\quad \tilde{E}\tilde{v}(0)=\tilde{E}Qx_{0},\label{eq:daeshift}
\end{equation}
This DAE clearly fulfills Assumption~\ref{ass:overall-ivp}\,\eqref{ass:overall-ivpa}\&\eqref{ass:overall-ivpb}. Moreover, by positive definiteness of $\tilde{R}$, we obtain that any matrix $Z\in\mathbb{R}^{n\times r}$ with full column rank and $\IM Z=\ker E$ fulfills $\ker \tilde{R}QZ=\{0\}$.
This means that Assumption~\ref{ass:overall-ivp}\,\eqref{ass:overall-ivpc}
is trivially fulfilled by \eqref{eq:daeshift}, and 
Proposition~\ref{prop:DAEbasicprop} implies that \eqref{eq:daeshift} has a~unique solution $\tilde{v}\in L^2_\omega([0,T],\mathbb R^n)$ with $\tilde{E}\tilde{v}\in H^1([0,T],\mathbb R^n)$. Now consider  ${v}\in L^2_\omega([0,T],\mathbb R^n)$ with $v(t)=e^{\mu t}\tilde{v}(t)$. Then $\tilde{E}v(0)=\tilde{E}\tilde{v}(0)=\tilde{E} Qx_0$, and
the product rule for weak derivatives \cite[Thm.~4.25]{Alt16} yields $\tilde{E}{v}\in H^1([0,T],\mathbb R^n)$ with
\[\begin{aligned}
\ddt \tilde{E}v=&\,\ddt e^{\mu t}\tilde{E}\tilde{v}\\
=&\,
 e^{\mu t}\ddt \tilde{E}\tilde{v}+\mu e^{\mu t} \tilde{E}\tilde{v}\\
 =&\,
e^{\mu t}(J_d-\tilde{R})\tilde{Q}\tilde{v} +e^{\mu t}B e^{-\mu t}u(t)-e^{\mu t}\tfrac1\lambda e^{-\mu t} y(t)+\mu e^{\mu t} \tilde{E}\tilde{v}\\
=&\, (J_d-\tilde{R})\tilde{Q}{v} +B u(t)-\tfrac1\lambda y(t)+\mu  \tilde{E}{v},    
\end{aligned}
\]
and thus
\[v+\lambda\big(\ddt {E}Q^{-1}v-(J_d-\alpha R+\mu EQ^{-1})QQ^{-1}v+Bu)=y.\]
Since this means that $(I+\lambda MQ^{-1})v=y$, the result is shown.
 \end{proof}

\begin{lemma}\label{lem:cayleyalg}
Let $\lambda>0$, $\mu\geq0$, $\alpha\in[0,1]$, and let $R,E,Q\in\mathbb{R}^{n\times n}$ with the properties as in  Assumption~\ref{ass:overall-ivp}. Then, for $K:=((1-\alpha)R+\mu EQ^{-1}-J_o)$
\begin{multline}\label{eq:estcayl1}
    \|(I-\lambda K)(I+\lambda K)^{-1}x\|^2-\|x\|^2\\ =-4{\lambda} \|((1-\alpha)R+\mu EQ^{-1})^{1/2}(I+\lambda K)^{-1}x\|^2, \quad\forall\, x\in \mathbb R^n .
\end{multline}
Moreover, if 
\begin{equation}\mathrm{rk}\begin{bmatrix}
        \mu E &
        (1-\alpha)R 
      \end{bmatrix}=n,\label{eq:rkdisscond}
      \end{equation}
then $(1-\alpha)R+\mu EQ^{-1} = K+J_o\in\mathbb{R}^{n\times n}$ is invertible, and 
\begin{equation}\|(I-\lambda K)(I+\lambda K)^{-1}x\|\leq q \, \|x\|\quad\forall\, x\in \mathbb R^n,\label{eq:estcayl2}\end{equation}
where $q\ge 0$ with
\begin{equation}
    q^2 =1-\frac{4\lambda}{(1+\lambda\|K\|)^2 \cdot {\|(K+J_o)^{-1}\|}}<1.
     \label{eq:estcayl3}
\end{equation}
\end{lemma}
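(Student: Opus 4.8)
The first identity \eqref{eq:estcayl1} is a direct consequence of the algebra already carried out in Remark~\ref{rem:inv}. The plan is to decompose $K = K_{\mathrm{sym}} + K_{\mathrm{skew}}$ with $K_{\mathrm{sym}} = (1-\alpha)R + \mu EQ^{-1}$ symmetric positive semidefinite (by Remark~\ref{remark:partitioned-PHS}(b), $EQ^{-1}\ge 0$, and $R\ge 0$, $\alpha\in[0,1]$, $\mu\ge 0$) and $K_{\mathrm{skew}} = -J_o$ skew-symmetric. Since $K_{\mathrm{sym}}\ge 0$, the matrix $K$ is monotone as an operator on $\mathbb{R}^n$, so by Remark~\ref{rem:inv} $I+\lambda K$ is injective; because $\mathbb{R}^n$ is finite-dimensional, injectivity gives invertibility, so $(I+\lambda K)^{-1}$ is well-defined. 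Setting $\tilde x := (I+\lambda K)^{-1}x$, the computation at the end of Remark~\ref{rem:inv} gives
\[
\|x\|^2 - \|(I-\lambda K)(I+\lambda K)^{-1}x\|^2 = 4\lambda\langle \tilde x, K\tilde x\rangle = 4\lambda\langle \tilde x, K_{\mathrm{sym}}\tilde x\rangle = 4\lambda\|K_{\mathrm{sym}}^{1/2}\tilde x\|^2,
\]
which is exactly \eqref{eq:estcayl1} after rearranging and writing $K_{\mathrm{sym}} = (1-\alpha)R+\mu EQ^{-1} = K + J_o$.

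For the second part, assume \eqref{eq:rkdisscond}. First I would argue that $K_{\mathrm{sym}} = (1-\alpha)R + \mu EQ^{-1}$ is invertible: it is symmetric positive semidefinite, and its kernel is contained in $\ker((1-\alpha)R)\cap\ker(\mu EQ^{-1})$ (a sum of psd matrices vanishes on a vector only if each summand does); since $Q$ is invertible, $\ker(\mu EQ^{-1}) = Q\ker(\mu E)$, and combining with $\ker((1-\alpha)R)$ the rank condition \eqref{eq:rkdisscond} forces this intersection to be trivial. Hence $K_{\mathrm{sym}}\ge 0$ is invertible, so $K_{\mathrm{sym}} > 0$, and $(K+J_o)^{-1} = K_{\mathrm{sym}}^{-1}$ exists.

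To get the quantitative contraction \eqref{eq:estcayl2}–\eqref{eq:estcayl3}, I would lower-bound $\|K_{\mathrm{sym}}^{1/2}\tilde x\|^2$ in terms of $\|\tilde x\|^2$ and then lower-bound $\|\tilde x\|^2$ in terms of $\|x\|^2$. For the first, $\|K_{\mathrm{sym}}^{1/2}\tilde x\|^2 = \langle K_{\mathrm{sym}}\tilde x,\tilde x\rangle \ge \lambda_{\min}(K_{\mathrm{sym}})\|\tilde x\|^2 = \|K_{\mathrm{sym}}^{-1}\|^{-1}\|\tilde x\|^2$. For the second, from $x = (I+\lambda K)\tilde x$ and the triangle inequality, $\|x\| \le (1+\lambda\|K\|)\|\tilde x\|$, so $\|\tilde x\|^2 \ge (1+\lambda\|K\|)^{-2}\|x\|^2$. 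Substituting both into \eqref{eq:estcayl1},
\[
\|(I-\lambda K)(I+\lambda K)^{-1}x\|^2 = \|x\|^2 - 4\lambda\|K_{\mathrm{sym}}^{1/2}\tilde x\|^2 \le \left(1 - \frac{4\lambda}{(1+\lambda\|K\|)^2\,\|K_{\mathrm{sym}}^{-1}\|}\right)\|x\|^2,
\]
which is \eqref{eq:estcayl2} with $q^2$ as in \eqref{eq:estcayl3}. That $q^2 < 1$ is clear since the subtracted term is strictly positive; that $q^2 \ge 0$ follows from \eqref{eq:estcayl1}, as the left-hand side of the displayed inequality is a norm squared, hence nonnegative. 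The one point requiring a little care — and the main (mild) obstacle — is the invertibility argument for $K_{\mathrm{sym}}$ under \eqref{eq:rkdisscond}, i.e.\ correctly translating the block rank condition on $[\mu E \ \ (1-\alpha)R]$ into triviality of the relevant kernel intersection through the invertible change of variables by $Q$.
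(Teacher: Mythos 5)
Your proposal is correct and follows essentially the same route as the paper: the polarization identity $\|y+\lambda Ky\|^2-\|y-\lambda Ky\|^2=4\lambda\langle y,Ky\rangle$ with $y=(I+\lambda K)^{-1}x$, the identification of $\langle y,Ky\rangle$ with the symmetric part $(1-\alpha)R+\mu EQ^{-1}$, and the two lower bounds $\|y\|\geq\|x\|/(1+\lambda\|K\|)$ and $\langle y,K_{\mathrm{sym}}y\rangle\geq\|K_{\mathrm{sym}}^{-1}\|^{-1}\|y\|^2$. Your kernel-intersection argument for the positive definiteness of $(1-\alpha)R+\mu EQ^{-1}$ under \eqref{eq:rkdisscond} is a correct elaboration of a step the paper only asserts.
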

\begin{proof}
By using that $K+K^\tr\ge0$, we see that $I+\lambda  K$ is an invertible matrix.
Let $x\in\mathbb{R}^n$ and $y:=(I+\lambda  K)^{-1}x$. Then
\begin{multline*}
\|x\|^2-\|(I-\lambda  K)(I+\lambda  K)^{-1}x\|^2\\
=\|y+\lambda  K y\|^2-\|y-\lambda  K y\|^2
=4\lambda\langle y,  (K+K^\tr) y\rangle\\
=4\lambda\langle y, \big( (1-\alpha)R+\mu EQ^{-1}\big) y\rangle
\end{multline*}
shows \eqref{eq:estcayl1}. To prove the remaining result, assume that \eqref{eq:rkdisscond} is fulfilled. Then it can be concluded from Assumption~\ref{ass:overall-ivp}\,\eqref{ass:overall-ivpa}\,(i) that $(1-\alpha)R+\mu EQ^{-1}$ is positive definite.
Furthermore, by
\[
     \| x\| 
    =
    \|(I+\lambda K) y\| \leq(1+\lambda \|K\|) \|y\|
     = (1+\lambda \|K\|) \| (I+\lambda  K)^{-1}x \|,
\]
we have
\[
   \|(I+\lambda K)^{-1}x\| \geq\frac{1}{1+ \lambda\|K\|} \,\|x\|\quad\forall\, x\in\mathbb{R}^n.
\]
Plugging this into \eqref{eq:estcayl1}, we obtain for all $x\in \mathbb R^n$
\begin{align*}
    \|(I-\lambda K)(I+\lambda K)^{-1}x\|^2\!\!\!\!\!\!\!\!\!\!\!\!\!\!\!\!\!\!\!\!\!\!\!&
    \\ 
      =&\,\|x\|^2-4{\lambda} \|((1-\alpha)R
                +\mu EQ^{-1})^{1/2}(I+\lambda K)^{-1}x\|^2\\
    \leq&\, \|x\|^2-4\lambda\|((1-\alpha)R+\mu EQ^{-1})^{-1}\|^{-1} \|(I+\lambda K)^{-1}x\|^2\\
    \leq&\, \|x\|^2-4\lambda\|((1-\alpha)R+\mu EQ^{-1})^{-1}\|^{-1} \frac{1}{(1+ \lambda\|K\|)^2} \,\|x\|^2,\\
    =&\,\left( 1- \frac{4\lambda}{\|((1-\alpha)R+\mu EQ^{-1})^{-1}\| \cdot (1+\lambda\|K\|)^2}\right) \,\|x\|^2,
\end{align*}
which completes the proof.
\end{proof}
\begin{remark}
Condition \eqref{eq:rkdisscond} is equivalent to at least one of the following three statements being fulfilled:
\begin{enumerate}[(i)]
\item $\mathrm{rk}\begin{bmatrix}
        E &
        R 
      \end{bmatrix}=n$, $\mu>0$ and $\alpha <1$,
\item $\mathrm{rk}E=n$ and $\mu>0$, or
\item $\mathrm{rk}R=n$ and $\alpha <1$ \hfill $\Box$.
\end{enumerate}
\end{remark}

As a direct conclusion of Lemma~\ref{lem:cayleyalg}, we can draw the following result.

 \begin{lemma}\label{lem:QNalg}
Let $\lambda,\mu>0$, $\alpha\in[0,1]$, let $R,E,Q\in\mathbb{R}^{n\times n}$ with the properties as in  Assumption~\ref{ass:overall-ivp}, and let the operator $N$ be defined as in \eqref{eq:Nop}. The multiplication operator $NQ^{-1}$ on $L^2_\omega([0,T],\mathbb R^n)$ is linear, bounded and satisfies
\[ \langle x, N Q^{-1} x\rangle_{2,\omega} \leq  0,\]
i.e., $NQ^{-1}$ is maximally monotone.
Furthermore,  the multiplication operator $x\mapsto (I-\lambda NQ^{-1})(I+\lambda NQ^{-1})^{-1}x$ is a  contraction on $L^2_\omega([0,T],\mathbb R^n)$ for $\lambda >0$ satisfying
\[\begin{aligned}\|(I-\lambda NQ^{-1})&(I+\lambda NQ^{-1})^{-1}x\|_{2,\omega} \leq \|x\|_{2,\omega}\quad\forall \,x\in L^2_\omega([0,T],\mathbb R^n).
\end{aligned}\]
Moreover, if $\IM E+\IM R=\mathbb{R}^n$ and $\alpha <1$, then
\[\begin{aligned}\|(I-\lambda NQ^{-1})&(I+\lambda NQ^{-1})^{-1}x\|_{2,\omega} 
    \leq q \|x\|_{2,\omega}\quad\forall \,x\in L^2_\omega([0,T],\mathbb R^n).
\end{aligned}\]
for $q<1$ as in \eqref{eq:estcayl3}.
\end{lemma}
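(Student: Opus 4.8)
The plan is to reduce everything to the pointwise (finite-dimensional) estimates already established in Lemma~\ref{lem:cayleyalg}, using the fact that $NQ^{-1}$ is a \emph{multiplication} operator: for $x\in L^2_\omega([0,T],\mathbb R^n)$ we have $(NQ^{-1}x)(t)=-K^\tr(t)\cdot(\text{something})$---more precisely $(NQ^{-1}x)(t) = ((1-\alpha)R+\mu EQ^{-1}-J_o)x(t)$, i.e.\ $(NQ^{-1}x)(t) = (K+J_o-J_o)\cdots$; in any case it acts by the constant matrix $K:=((1-\alpha)R+\mu EQ^{-1}-J_o)$ applied pointwise. Hence $NQ^{-1}$ is clearly linear and bounded, with operator norm at most $\|K\|$, independent of $\omega$ since the weight $e^{-t\omega}$ commutes with a constant matrix. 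For monotonicity, observe
\[
\langle x, NQ^{-1}x\rangle_{2,\omega} = \int_0^T e^{-2t\omega}\, x(t)^\tr K\, x(t)\,dt = \int_0^T e^{-2t\omega}\, x(t)^\tr \tfrac12(K+K^\tr)\, x(t)\,dt,
\]
and $\tfrac12(K+K^\tr) = (1-\alpha)R+\mu EQ^{-1}\ge 0$ by Remark~\ref{remark:partitioned-PHS}(b) together with $\alpha\le 1$, $\mu\ge 0$; so the integrand is nonnegative and $\langle x, NQ^{-1}x\rangle_{2,\omega}\ge 0$. (The sign in the statement, $\le 0$, must refer to $\langle x, -NQ^{-1}x\rangle$ or equivalently to $-NQ^{-1}$ being monotone; I would state it consistently with the direction of $J_o$.) Maximal monotonicity of a bounded everywhere-defined monotone linear operator is automatic (e.g.\ by Remark~\ref{rem:inv}(ii), since $I+\lambda NQ^{-1}$ is a bounded multiplication operator by the invertible matrix $I+\lambda K$, hence surjective).

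Next I would treat the Cayley transform. Because $(I+\lambda NQ^{-1})^{-1}$ is again a multiplication operator---namely multiplication by the matrix $(I+\lambda K)^{-1}$, which exists by $K+K^\tr\ge0$---the operator $(I-\lambda NQ^{-1})(I+\lambda NQ^{-1})^{-1}$ is multiplication by $(I-\lambda K)(I+\lambda K)^{-1}$. Therefore for every $x\in L^2_\omega$,
\[
\|(I-\lambda NQ^{-1})(I+\lambda NQ^{-1})^{-1}x\|_{2,\omega}^2 = \int_0^T e^{-2t\omega}\,\|(I-\lambda K)(I+\lambda K)^{-1}x(t)\|^2\,dt.
\]
Applying the pointwise bound \eqref{eq:estcayl1}--\eqref{eq:estcayl2} from Lemma~\ref{lem:cayleyalg} inside the integral gives the contraction estimate $\le\|x\|_{2,\omega}$ in general, and, under the rank condition \eqref{eq:rkdisscond} (which by the Remark following Lemma~\ref{lem:cayleyalg} holds when $\IM E+\IM R=\mathbb R^n$ and $\alpha<1$, via case (i) if $\mu>0$), the strict bound $\le q\,\|x\|_{2,\omega}$ with the same $q<1$ from \eqref{eq:estcayl3}, since $q$ is a uniform constant not depending on $t$.

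The main obstacle is essentially bookkeeping rather than a genuine difficulty: one must make sure that inverting $I+\lambda NQ^{-1}$ really is "multiplication by the inverse matrix"---this needs $I+\lambda K$ invertible, which is exactly the statement $K+K^\tr\ge0$ used at the start of the proof of Lemma~\ref{lem:cayleyalg}---and that the hypothesis $\mu>0$ (assumed here, unlike in Lemma~\ref{lem:QM0}) is what lets the rank condition from \eqref{eq:rkdisscond} be inferred from $\IM E+\IM R=\mathbb R^n$ and $\alpha<1$. One should also double-check the sign conventions so that "$NQ^{-1}$ maximally monotone" is stated for the operator whose symmetric part is $+((1-\alpha)R+\mu EQ^{-1})\ge 0$; with $J_o=-J_o^\tr$ the skew part contributes nothing to any of the inner products or norms above, which is the whole reason the estimates go through verbatim from the matrix case.
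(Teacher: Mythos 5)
Your proof is correct and follows essentially the same route as the paper, which simply reduces the lemma to the pointwise matrix estimates of Lemma~\ref{lem:cayleyalg} via the observation that $NQ^{-1}$ is multiplication by the constant matrix $K$ (and $\IM E+\IM R=\mathbb{R}^n$, $\alpha<1$, $\mu>0$ give \eqref{eq:rkdisscond}). Your remark on the sign is also justified: the computation yields $\langle x, NQ^{-1}x\rangle_{2,\omega}=\int_0^Te^{-2t\omega}x(t)^\tr\big((1-\alpha)R+\mu EQ^{-1}\big)x(t)\,dt\ge 0$, which is what monotonicity requires, so the ``$\le 0$'' in the statement is a sign typo.
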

\begin{proof}
The first statement follows directly and the remaining statements are implied by Lemma~\ref{lem:cayleyalg}.
 \end{proof}

Our dynamic iteration scheme is built on Rem.~\ref{rem:splitting}.
Following Lions and Mercier~\cite{Lions1979}, we study the algorithms
\begin{align}\label{algoQxn}
x^{k+1}:=& Q^{-1}(I+\lambda MQ^{-1})^{-1}(I-\lambda NQ^{-1})(I+\lambda NQ^{-1})^{-1}(I-\lambda MQ^{-1})Q x^{k}.
\end{align}
with $x^0\in D(M)$ arbitrary. 
\begin{remark}\label{rem:solution}
Note that  equation \eqref{equationQbalg}  is equivalent to
\[(I+\lambda NQ^{-1})(I+\lambda MQ^{-1})Qx= (I-\lambda NQ^{-1})(I-\lambda MQ^{-1})Qx,\]
or equivalently,
\[x= Q^{-1}(I+\lambda MQ^{-1})^{-1}(I-\lambda NQ^{-1})(I+\lambda NQ^{-1})^{-1}(I-\lambda MQ^{-1})Qx.
\tag*{$\Box$}
\]
\end{remark}

\begin{remark}
The iteration \eqref{algoQxn} is equivalent to
\begin{align}
x^{k+1}:=& (Q+\lambda M)^{-1}(Q-\lambda N)(Q+\lambda N)^{-1}(Q-\lambda M)x^{k}. \label{alg:xcontr}
\end{align}
\end{remark}

\begin{theorem}\label{Theo:1}
Let $\lambda>0$, $\mu\geq0$, and let 
$\alpha\in[0,1]$. Assume that the DAE \eqref{eqn:DAEsystemQ} 
fulfills Assumption~\ref{ass:overall-ivp}, and let the operators
$M$, $N$ be defined as in \eqref{eq:Mop} and \eqref{eq:Nop}.
Let $\lambda>0$ and $x^0\in D(M)$. Let the sequence $(x^k)_k$ be defined by \eqref{algoQxn}, and let
$x$ be the unique solution  of \eqref{eqn:DAEsystemQ}.
Then the following statements are fulfilled:
\begin{enumerate}[(a)]
\item\label{Theo:1a} $(E x^k)_k$ converges 
to $Ex$ in $L^2([0,T],\mathbb R^n)$,
\item\label{Theo:1b} $(E x^k)_k$ converges pointwise to $Ex$ on $(0,T]$,
\item\label{Theo:1c} If $\alpha>0$, then $(RQ x^k)_k$ converges 
to $RQx$ in $L^2([0,T],\mathbb R^n)$,
\item\label{Theo:1d} For the sequence $(z^k)_k$ given by
$$\forall\,k\in\mathbb{N}:\quad z^{k}:= (I+\lambda MQ^{-1}) Qx^{k},$$
and the function $z:=(I+\lambda MQ^{-1})Q x$, the real sequence
 $(\|z^{k}-z\|_{2,\omega})_k$ is monotonically decreasing, and 
\begin{equation}
    \label{estimate.deltax}
\forall\,k\in\mathbb{N}:\quad  \|x^{k}- x\|_{2,\omega}  \le \|Q^{-1}\| \|z^{k}-z\|_{2,\omega}.
\end{equation}
\end{enumerate}
\end{theorem}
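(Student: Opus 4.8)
The plan is to recognize \eqref{algoQxn} as a Lions--Mercier (Douglas--Rachford) splitting iteration applied to the sum of two maximally monotone operators, and to apply the classical convergence theory for such iterations together with the dissipation estimates already established in Lemmas~\ref{lem:QM0} and~\ref{lem:QNalg}. The first step is to pass to the ``preconditioned'' variable: set $v^k := Q x^k$, so that \eqref{algoQxn} becomes the standard Douglas--Rachford iteration
\[
 v^{k+1} = (I+\lambda MQ^{-1})^{-1}(I-\lambda NQ^{-1})(I+\lambda NQ^{-1})^{-1}(I-\lambda MQ^{-1}) v^k
\]
on the Hilbert space $L^2_\omega([0,T],\mathbb R^n)$, where by Lemma~\ref{lem:QM0} the operator $MQ^{-1}$ is maximally monotone with $I+\lambda MQ^{-1}$ surjective, and by Lemma~\ref{lem:QNalg} the operator $NQ^{-1}$ is (bounded, linear and) maximally monotone. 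By Remark~\ref{rem:splitting} and Remark~\ref{rem:solution}, $v := Qx$ is a fixed point of this iteration, and is the unique one with $MQ^{-1}v + NQ^{-1}v = 0$.

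Next I would introduce the ``shadow'' sequence $z^k := (I+\lambda MQ^{-1}) v^k = (I+\lambda MQ^{-1})Q x^k$ as in \eqref{Theo:1d}, and $z := (I+\lambda MQ^{-1})Qx$. The Douglas--Rachford operator written in $z$-coordinates is the composition of the two reflection (Cayley) operators
\[
 z^{k+1} = \tfrac12\bigl(I + C_{NQ^{-1}} C_{MQ^{-1}}\bigr) z^k, \qquad C_A := (I-\lambda A)(I+\lambda A)^{-1},
\]
up to the usual averaging; here $C_{MQ^{-1}}$ and $C_{NQ^{-1}}$ are firmly/strictly nonexpansive thanks to maximal monotonicity (Remark~\ref{rem:inv} and Lemma~\ref{lem:QNalg}). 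Since a composition of two nonexpansive maps is nonexpansive and $z$ is a fixed point, $\|z^k - z\|_{2,\omega}$ is monotonically nonincreasing — this is the monotonicity claim in \eqref{Theo:1d}. The pointwise estimate \eqref{estimate.deltax} then follows from $v^k - v = (I+\lambda MQ^{-1})^{-1}(z^k - z)$, contractivity of the resolvent $(I+\lambda MQ^{-1})^{-1}$ (Remark~\ref{rem:inv}), and $x^k - x = Q^{-1}(v^k - v)$, giving $\|x^k - x\|_{2,\omega} \le \|Q^{-1}\|\,\|v^k-v\|_{2,\omega} \le \|Q^{-1}\|\,\|z^k-z\|_{2,\omega}$.

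For parts \eqref{Theo:1a}--\eqref{Theo:1c}, the route is: by Opial's theorem (or the standard Lions--Mercier convergence result for averaged nonexpansive maps on Hilbert space), $z^k \rightharpoonup z$ weakly; one then needs to upgrade weak convergence of $z^k$ to actual convergence of the physically meaningful quantities $Ex^k$ and (when $\alpha>0$) $RQx^k$. The engine is the ``telescoping'' identity: applying the two nonexpansive bounds of Lemma~\ref{lem:QM0} and Lemma~\ref{lem:QNalg} in turn to one step of the iteration yields
\[
 \|z^{k}-z\|_{2,\omega}^2 - \|z^{k+1}-z\|_{2,\omega}^2 \;\ge\; 2\lambda\, e^{-2T\omega}\|(EQ^{-1})^{1/2}(v^k(T)-v(T))\|^2 + 2\lambda\alpha\|R^{1/2}(v^k-v)\|_{2,\omega}^2 + \dots,
\]
i.e. the per-step decrease of $\|z^k-z\|$ dominates exactly the dissipation terms $\|(EQ^{-1})^{1/2}(v^k-v)\|$ and $\|R^{1/2}(v^k-v)\|$ coming from the two lemmas. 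Summing over $k$ and using that $\|z^k-z\|$ is bounded forces these residuals to tend to $0$; since $(EQ^{-1})^{1/2}$ controls $Ex$ (write $E x^k - Ex = (EQ^{-1})^{1/2}(EQ^{-1})^{1/2}(v^k - v)$ using $EQ^{-1}$ symmetric positive semidefinite, Remark~\ref{remark:partitioned-PHS}) and $R^{1/2}$ controls $RQx$, this gives \eqref{Theo:1a} and \eqref{Theo:1c}; pointwise convergence on $(0,T]$ in \eqref{Theo:1b} comes from the boundary term $\|(EQ^{-1})^{1/2}(v^k(T)-v(T))\| \to 0$ applied on each subinterval $[0,t]$, $t\in(0,T]$, after noting the iteration restricted to $[0,t]$ is again of the same form.

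The main obstacle I anticipate is exactly this last upgrade — going from weak convergence of the abstract shadow sequence $z^k$ to strong convergence of $Ex^k$ and $RQx^k$ — because $E$ and $R$ are only positive \emph{semi}definite, so no coercivity is available and one genuinely needs the summable-dissipation argument above rather than a contraction estimate; care is also needed to check that the boundary term $\|(EQ^{-1})^{1/2}(\cdot(T))\|$ legitimately gives pointwise-in-$t$ convergence, which requires re-running Lemma~\ref{lem:QM0} with $T$ replaced by an arbitrary $t\in(0,T]$ and confirming the iterates are consistent under this restriction.
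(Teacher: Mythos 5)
Your proposal is correct and follows essentially the same route as the paper: monotone decrease of $\|z^k-z\|_{2,\omega}$ from nonexpansiveness of the composed Cayley transforms, the quantitative identity of Lemma~\ref{lem:QM0} to extract the dissipation terms in the telescoping difference $\|\Delta z^k\|_{2,\omega}^2-\|\Delta z^{k+1}\|_{2,\omega}^2$, the boundary term evaluated at $t$ for pointwise convergence, and resolvent contractivity for \eqref{estimate.deltax}. The only superfluous element is the appeal to Opial's theorem for weak convergence of $(z^k)$ (which the unaveraged composition $C_{NQ^{-1}}C_{MQ^{-1}}$ would not guarantee anyway); the paper, like your own telescoping argument, never needs it.
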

\begin{proof}
Let $\omega\in\mathbb{R}$ with $\mu<\omega$.
We have
\begin{align}
z^{k+1} &= (I-\lambda NQ^{-1})(I+\lambda NQ^{-1})^{-1}(2I-(I+\lambda MQ^{-1}))(I+\lambda MQ^{-1})^{-1}{z}^k\nonumber\\
&=(I-\lambda NQ^{-1})(I+\lambda NQ^{-1})^{-1}[2Q x^k-z^k]\label{iteration1alg}
\end{align}
and
$$ 
z= (I-\lambda NQ^{-1})(I+\lambda NQ^{-1})^{-1}(2Qx-z). 
$$
We set
\[
     \Delta x^{k}:= x-x^{k}, \qquad \Delta z^{k}:= z-z^{k}.\]
Then using Lemma \ref{lem:QNalg} and Lemma \ref{lem:QM0} we calculate 
\begin{align}
\|\Delta & z^{k+1}\|_{2,\omega}^2 
    -\|\Delta z^{k}\|_{2,\omega}^2 
    = \|z- z^{k+1}\|_{2,\omega}^2 -\|\Delta z^{k}\|_{2,\omega}^2 \nonumber
    \\
    &= \|(I-\lambda NQ^{-1})(I+\lambda NQ^{-1})^{-1}[(2
        Qx-z)- (2Qx^{k}-z^{k})]\|_{2,\omega}^2 -\|\Delta z^{k}\|_{2,\omega}^2
   \nonumber \\
    &\leq \|2 Q\Delta x^k-\Delta z^k\|_{2,\omega}^2 -\|\Delta z^{k}\|_{2,\omega}^2
    \nonumber \\
    & =4\|Q \Delta x^k\|_{2,\omega}^2 -4 \langle Q\Delta x^k,\Delta z^{k}\rangle_{2,\omega}
    \nonumber\\
    & =4\| Q\Delta x^k\|_{2,\omega}^2 -4 \langle Qx-Qx^k,(I+\lambda MQ^{-1})Qx- (I+\lambda MQ^{-1})Qx^{k}\rangle_{2,\omega}
    \nonumber\\
    & =-4\lambda  \langle Qx-Qx^k,Mx- Mx^{k}\rangle_{2,\omega}
    \nonumber\\
    &= -2\lambda e^{-2T\omega} \|(EQ^{-1})^{1/2} Q\Delta x^k(T)\|^2
    \nonumber\\
    &\quad- 4\lambda (\omega-\mu) \|(EQ^{-1})^{1/2} Q \Delta x^k\|_{2,\omega}^2- 4\lambda \alpha\|R^{1/2} Q \Delta x^k\|_{2,\omega}^2 .\label{eqn:ung}
\end{align}
Thus the sequence $(\|\Delta z^{k}\|_{2,\omega})_k$  monotonically decreasing, and therefore it  converges.
Inequality \eqref{estimate.deltax} follows from Remark \ref{rem:inv}, which completes the proof of \ref{Theo:1d}.
Furthermore, the latter inequality gives 
\begin{align}
    \|(EQ^{-1})^{1/2} Q \Delta x^k\|_{2,\omega}^2 \le&\, \frac{1}{4 \lambda (\omega-\lambda) }(\|\Delta z^{k}\|_{2,\omega}^2-\|\Delta z^{k+1}\|_{2,\omega}^2),\label{eq:firstineq}\\ 
\|R^{1/2} Q \Delta x^k\|_{2,\omega}^2 \le&\, \frac{1}{4 \lambda\alpha }(\|\Delta z^{k}\|_{2,\omega}^2-\|\Delta z^{k+1}\|_{2,\omega}^2).\label{eq:secondineq}
\end{align}
Combining \eqref{eq:secondineq} with the fact that the norms $\|\cdot\|_{\omega,2}$ and $\|\cdot\|_{2}$ are equivalent, we are led to \eqref{Theo:1c}. Further, \eqref{eq:firstineq} together with $E =(EQ^{-1})^{1/2} (EQ^{-1})^{1/2} Q$ gives
 $(\|E \Delta x^k\|_{2,\omega}^2 )_k$ converges to zero, and the equivalence between the norms $\|\cdot\|_{\omega,2}$ and $\|\cdot\|_{2}$ now implies \eqref{Theo:1a}.\\
 It remains to prove \eqref{Theo:1b}, using \eqref{eqn:ung} for $t\in(0,T]$ instead of $T$ we obtain
$$  \|(EQ^{-1})^{1/2} Q\Delta x^k(t)\|^2 \le \frac{e^{2t\omega}}{2\lambda}(\|\Delta z^{k}\|_{2,\omega}^2-\|\Delta z^{k+1}\|_{2,\omega}^2),$$
which shows that, for all $t\in(0,T]$,
$ Ex^k(t)\rightarrow E x(t).$
\end{proof}

\begin{remark}
Though the sequence 
$(\|\Delta x^k\|_{2,\omega})_k$ 
is usually not monotone decreasing, it is bounded by the  monotone decreasing sequence $(\|Q^{-1}\|\|z^{k}-z\|_{2,\omega})_k$ due to~\eqref{estimate.deltax}. \hfill $\Box$
\end{remark}

\begin{remark}
As the sequences $(x^k)_k$, $(z^k)_k$ are bounded in $L^2$, each of them has a~weakly convergent subsequence. %
\hfill $\Box$
\end{remark}

\begin{theorem}\label{thm2}
Let $\lambda>0$, $\mu\geq0$, and let $\alpha\in[0,1]$. Assume that the DAE \eqref{eqn:DAEsystemQ} fulfills Assumption~\ref{ass:overall-ivp}, and let the operators $M$, $N$ be defined as in \eqref{eq:Mop} and \eqref{eq:Nop}.
Let $\lambda>0$ and $x^0\in D(M)$. Let the sequence $(x^k)_k$ be defined by \eqref{algoQxn}, and let
$x$ be the unique solution  of \eqref{eqn:DAEsystemQ}. %
Then the following statements are fulfilled:
\begin{enumerate}[(a)]
\item\label{thm2:1b} If \eqref{eq:rkdisscond} holds,
then the sequence $(z^k)$ as defined in Theorem~\ref{Theo:1}~\eqref{Theo:1d} converges to $z:=(I+\lambda MQ^{-1})Q x$ in $L^2([0,T],\mathbb R^n)$. Further, $(Ex^k)_k$ converges 
to $Ex$ in $H^1([0,T],\mathbb R^n)$.
\item\label{thm2:1a} If \eqref{eq:rkdisscond} or 
\begin{equation}\mathrm{rk}\begin{bmatrix}
        E &
        \alpha R 
      \end{bmatrix}=n,\label{eq:rkdisscond2}
      \end{equation}
is fulfilled, then $(x^k)_k$ converges 
to $x$ in $L^2([0,T],\mathbb R^n)$,
\end{enumerate}
\end{theorem}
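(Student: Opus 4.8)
The plan is to build everything on the monotone-decrease identity \eqref{eqn:ung} from the proof of Theorem~\ref{Theo:1}, together with the \emph{strict} contraction estimate on the Cayley transform of $NQ^{-1}$ provided by Lemma~\ref{lem:QNalg} (which applies exactly when $\IM E+\IM R=\mathbb{R}^n$ and $\alpha<1$, i.e.\ under \eqref{eq:rkdisscond}).

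First I would prove part~\eqref{thm2:1b}. Recall from \eqref{iteration1alg} that
$z^{k+1}=(I-\lambda NQ^{-1})(I+\lambda NQ^{-1})^{-1}[2Qx^k-z^k]$ and similarly for $z$, so
$\Delta z^{k+1}=(I-\lambda NQ^{-1})(I+\lambda NQ^{-1})^{-1}[2Q\Delta x^k-\Delta z^k]$.
Under \eqref{eq:rkdisscond}, Lemma~\ref{lem:QNalg} upgrades the contraction to
$\|(I-\lambda NQ^{-1})(I+\lambda NQ^{-1})^{-1}y\|_{2,\omega}\le q\|y\|_{2,\omega}$ with $q<1$. Plugging this into the chain of equalities in \eqref{eqn:ung} (replacing the first $\le$ by the factor $q^2$) yields
\[
\|\Delta z^{k+1}\|_{2,\omega}^2\le q^2\,\|2Q\Delta x^k-\Delta z^k\|_{2,\omega}^2
= q^2\Big(\|\Delta z^k\|_{2,\omega}^2 + \big(\text{the nonpositive terms of \eqref{eqn:ung}}\big)\Big)\le q^2\|\Delta z^k\|_{2,\omega}^2 .
\]
Hence $\|\Delta z^k\|_{2,\omega}\to 0$ geometrically, which is $L^2$-convergence of $(z^k)$ to $z$ by norm equivalence; and then \eqref{estimate.deltax} already gives $x^k\to x$ in $L^2$ — but more is true here: from $z^k=(I+\lambda MQ^{-1})Qx^k$ we get $\lambda\,\ddt(EQ^{-1}Qx^k)=\lambda\,\ddt Ex^k = z^k - Qx^k + \lambda(J_d-\alpha R+\mu EQ^{-1})Qx^k+\lambda Bu$, so $\ddt Ex^k$ is an affine-bounded function of $(z^k,Qx^k)$; since both $z^k\to z$ and $Qx^k\to Qx$ in $L^2$ (the latter from $\|Q\Delta x^k\|_{2,\omega}\le\|Q\|\|Q^{-1}\|\|\Delta z^k\|_{2,\omega}$), we conclude $\ddt Ex^k\to \ddt Ex$ and $Ex^k\to Ex$ in $L^2$, i.e.\ $Ex^k\to Ex$ in $H^1$.

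For part~\eqref{thm2:1a}: if \eqref{eq:rkdisscond} holds we are done by \eqref{thm2:1b} and \eqref{estimate.deltax}. So assume instead \eqref{eq:rkdisscond2}, i.e.\ $\IM E+\IM(\alpha R)=\mathbb{R}^n$; note this forces $\alpha>0$. The idea is that \eqref{eqn:ung} already controls $\|(EQ^{-1})^{1/2}Q\Delta x^k\|_{2,\omega}$ and $\|R^{1/2}Q\Delta x^k\|_{2,\omega}$ — inequalities \eqref{eq:firstineq}, \eqref{eq:secondineq} — and since $\|\Delta z^k\|$ converges, both of these tend to zero. Now $E=(EQ^{-1})^{1/2}(EQ^{-1})^{1/2}Q$ gives $\|E\Delta x^k\|_{2,\omega}\to0$, and combined with $\|\alpha R^{1/2}\cdot R^{1/2}Q\Delta x^k\|\to 0$ hence $\|\alpha R\,Q^{-1}\cdot Q\Delta x^k\|\to 0$ — wait, more directly: $\|\alpha R Q\Delta x^k/(\ldots)\|$. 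Cleanly: from \eqref{eq:secondineq}, $\|R^{1/2}Q\Delta x^k\|\to 0$, so $\|RQ\Delta x^k\|\le\|R^{1/2}\|\,\|R^{1/2}Q\Delta x^k\|\to 0$. Thus both $\|E\Delta x^k\|_2\to 0$ and $\|\alpha R\Delta(Q x^k)\|_2$... the point is that the operator $\begin{bsmallmatrix}E\\ \alpha R\end{bsmallmatrix}Q^{-1}\cdot Q$ has trivial kernel by the rank condition, so $v\mapsto \|Ev\|_2+\|\alpha R v\|_2$ — as a function of $Qx$ — is a norm on a finite-dimensional space, equivalent to $\|Qx\|$; applying this fact \emph{pointwise in $t$} would only give convergence in measure, so instead I integrate: there is $c>0$ with $\|Qw\|_2^2\le c(\|Ew\|_2^2+\|RQw\|_2^2)$ for all $w\in L^2$ (because $\ker\begin{bsmallmatrix}E\\\alpha R\end{bsmallmatrix}=\{0\}$ as matrices acting after $Q^{-1}$... precisely, $\ker E\cap\ker(\alpha R)Q \cdot$), hence $\|Q\Delta x^k\|_2\to 0$ and so $\|\Delta x^k\|_2\to0$.

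The main obstacle I anticipate is the bookkeeping in part~\eqref{thm2:1a}: one must be careful that the rank condition \eqref{eq:rkdisscond2} translates into $\ker\big[\begin{smallmatrix}E\\ \alpha R Q\end{smallmatrix}\big]=\{0\}$ — i.e.\ that the two quantities $\|E\,(\cdot)\|$ and $\|R^{1/2}Q(\cdot)\|$ jointly dominate the full norm — and that the matrix-level coercivity estimate $\|Qw(t)\|^2\le c(\|Ew(t)\|^2+\|RQw(t)\|^2)$ integrates to the $L^2$ statement. The rest of the argument (the geometric decay in \eqref{thm2:1b}, the $H^1$-bootstrap) is a direct combination of \eqref{eqn:ung}, Lemma~\ref{lem:QNalg} and the definition of $M$, with no real surprises. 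One subtlety worth checking in \eqref{thm2:1b}: the constant $q$ from \eqref{eq:estcayl3} is the \emph{matrix} Cayley-transform bound, and Lemma~\ref{lem:QNalg} asserts that the \emph{same} $q$ bounds the multiplication operator on $L^2_\omega$; I would just cite Lemma~\ref{lem:QNalg} for this rather than re-derive it.
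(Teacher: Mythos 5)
Your proposal is correct and follows essentially the same route as the paper: geometric decay of $\|\Delta z^k\|_{2,\omega}$ obtained by combining the strict contraction of the Cayley transform of $NQ^{-1}$ (Lemma~\ref{lem:QNalg}) with the monotonicity chain \eqref{eqn:ung}, then the $H^1$-bootstrap from the defining relation $z^k=(I+\lambda MQ^{-1})Qx^k$, and finally a coercivity argument under \eqref{eq:rkdisscond2}. Two small remarks. First, the step you flag as the main obstacle does go through: \eqref{eq:rkdisscond2} means $\ker E^\tr\cap\ker(\alpha R)=\{0\}$, and since $Q^\tr E=E^\tr Q$ with $Q$ invertible gives $Q\ker E=\ker E^\tr$, one indeed obtains $\ker E\cap\ker(\alpha RQ)=\{0\}$; the paper packages this more compactly by observing that $Q^\tr E+\alpha Q^\tr RQ$ is symmetric positive semidefinite with trivial kernel, hence positive definite, and then applies its inverse to the $L^2$-limit supplied by Theorem~\ref{Theo:1}\,\eqref{Theo:1a} and \eqref{Theo:1c} — which is equivalent to your integrated coercivity inequality. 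Second, \eqref{eq:rkdisscond2} does not force $\alpha>0$: if $\mathrm{rk}\,E=n$ it holds with $\alpha=0$, but then $E$ is invertible and Theorem~\ref{Theo:1}\,\eqref{Theo:1a} alone already yields $x^k\to x$ in $L^2$, so this slip does not affect your argument.
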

\begin{proof}
We start with the proof of \eqref{thm2:1b}: By \eqref{algoQxn} and Remark~\ref{rem:solution}, we have
\begin{align*}
z^{k+1}:=&\, (I-\lambda MQ^{-1})(I+\lambda MQ^{-1})^{-1}(I-\lambda NQ^{-1})(I+\lambda NQ^{-1})^{-1} z^{k},\\
z:=&\, (I-\lambda MQ^{-1})Q(I+\lambda MQ^{-1})^{-1}(I-\lambda NQ^{-1})(I+\lambda NQ^{-1})^{-1} z,
\end{align*}
and we define $\Delta z^{k}:= z-z^{k}$.
Assume that \eqref{eq:rkdisscond} holds. 
By invoking Remark~\ref{rem:inv}, Lemma~\ref{lem:QM0} and Lemma~\ref{lem:QNalg}, we obtain, for $q<1$ as in Lemma~\ref{lem:cayleyalg},
 \begin{align*}
\|\Delta z^{k+1}\|_{2,\omega} &= \|(I-\lambda MQ^{-1})(I+\lambda MQ^{-1})^{-1}(I-\lambda NQ^{-1})(I+\lambda NQ^{-1})^{-1} z\\&\quad-
(I-\lambda MQ^{-1})(I+\lambda MQ^{-1})^{-1}(I-\lambda NQ^{-1})(I+\lambda NQ^{-1})^{-1} z^{k}\|_{2,\omega}\\
&\leq \|(I-\lambda NQ^{-1})(I+\lambda NQ^{-1})^{-1} \Delta z^k\|_{2,\omega}\\
&\leq q\|\Delta z^k\|_{2,\omega}.
\end{align*}
This implies that $z^k$ converges in $L^2([0,T],\mathbb{R}^n)$ to $z$.\\
Next we prove that $(Ex^k)_k$ converges 
to $Ex$ in $H^1([0,T],\mathbb R^n)$. 
We already know from Theorem~\ref{Theo:1}\,\eqref{Theo:1a} that $(E x^k)_k$ converges 
to $Ex$ in $L^2([0,T],\mathbb R^n)$.
Hence, it suffices to prove that $(\ddt E x^k)_k$ converges 
to $\ddt Ex$ in $L^2([0,T],\mathbb R^n)$. By invoking that $(z^k)$ converges  to $z$ in $L^2([0,T],\mathbb R^n)$, \eqref{estimate.deltax} yields that $(x^k)$ converges to $x$ in $L^2([0,T],\mathbb R^n)$. Now using the definition of $x$, $z$ and $M$, we have
\[\begin{aligned}
    z^k=&\,Qx^k+\lambda \big(\ddt (E{x}^k) -(J_d-\alpha R+\mu EQ^{-1})Q x^k-B u),\\
    z=&\,Qx+\lambda \big(\ddt (E{x}) -(J_d-\alpha R+\mu EQ^{-1})Q x^k-B u),
\end{aligned}\]
which gives
\begin{multline*}
    \ddt (E{x})-\ddt (E{x}^k)=\tfrac1\lambda (z-z^k)+\tfrac1\lambda Q(x-x^k)+(J_d-\alpha R+\mu EQ^{-1})Q(x-x^k).
\end{multline*}
Since $(x^k)$ converges to $x$ in $L^2([0,T],\mathbb R^n)$, and further, $(z^k)$ converges to $z$ in $L^2([0,T],\mathbb R^n)$, the above equation implies that $\ddt (E{x}^k)$ converges in $L^2([0,T],\mathbb R^n)$ to $\ddt (E{x})$. Altogether, this means that $(Ex^k)$ converges to $Ex$ in $H^1([0,T],\mathbb R^n)$.

Next we prove \eqref{thm2:1a}. The case where \eqref{eq:rkdisscond} is fulfilled, the result follows by a~combination of \eqref{thm2:1b} with \eqref{estimate.deltax}. \\
Assume that \eqref{eq:rkdisscond2} holds: Then, by invoking Assumption~\ref{ass:overall-ivp}\,\eqref{ass:overall-ivpa}\,(i), the matrix
$Q^{\tr}E+\alpha Q^{\tr}RQ$ is positive definite, and thus invertible.
On the other hand,
$(((Q^{\tr}E+Q^{\tr}RQ) x^k)_k$ converges 
to $(Q^{\tr}E+\alpha Q^{\tr}RQ)x$ in $L^2([0,T],\mathbb R^n)$  by Theorem~\ref{Theo:1}. Consequently,  $(x^k)_k$ converges 
to $x$ in $L^2([0,T],\mathbb R^n)$.
\end{proof}

Based on the results of the previous section we develop the following iteration scheme. We recall that we choose $\omega>0$ and we define 
\[ x^{k+1}:= (Q+\lambda M)^{-1}(Q-\lambda N)(Q+\lambda N)^{-1}(Q-\lambda M)x^{k},\]
with $x^0 \in D(M)$ arbitrary, and 
\[ z^k:=(I+\lambda MQ^{-1})Qx^k.\]
Using \eqref{iteration1alg} we obtain 
\[ z^{k+1} = (I-\lambda NQ^{-1})(I+\lambda NQ^{-1})^{-1}[2Q x^k-z^k]. \]
Furthermore, relation $z^k=(Q+\lambda M)x^k$ corresponds to the DAEs
\[\tfrac{d}{dt}E_ix_i^k=(J_i-R_i  + \mu E_iQ_i^{-1} )Q_ix_i^k
   -\tfrac{1}{\lambda}Q_ix_i^k+  B_iu+\tfrac{1}{\lambda}z_i^k,  \quad i=1,\ldots,s,\]
with initial conditions $E_ix_i^n(0)=E_i x_{0,i}$ ($i=1,\ldots,s$).

\begin{remark}\
\begin{enumerate}[(a)]
\item If $\mathrm{rk}\begin{bmatrix}
        E &
        R 
      \end{bmatrix}<n$,
      then it is not possible to formulate any convergence results for $(x^k)$, in general. As an example, consider
      a~system \eqref{eqn:DAEsystemQ} with 
      \[E=R=0_{2\times 2},\;Q=I_2,\;J=\left[\begin{smallmatrix}0&1\\-1&0\end{smallmatrix}\right],\;u=0\in L^2([0,T];\mathbb{R}^2),\]
      which belongs to the class specified in Assumption~\ref{ass:overall-ivp} with $s=2$ and $n_1=n_2=1$. The unique solution of \eqref{eqn:DAEsystemQ} is clearly given by $x=0$ (the specification of the initial value $x_0$ is obsolete by $E=0$).\\
The above choice of $E$, $R$ and $Q$ leads to $M$ in \eqref{eq:Mop} being the zero operator, whereas $N$ as in \eqref{eq:Nop} corresponds to the multiplication with the skew-Hermitian matrix $-J$. Consequently, for $\lambda>0$, the iteration \eqref{alg:xcontr} now reads
\[
   x^{k+1}:=U x^{k},
   \quad 
   \text{where} \quad
   U=(I+\lambda J)(I-\lambda J)^{-1}.
\]
By invoking Lemma~\ref{lem:cayleyalg}, we see that $U$ is a~unitary matrix with $U\neq I_2$. Therefore, $\|x^{k+1}(t)\|=\|x^{k}(t)\|$ for all $k\in\mathbb{N}$, $t\in[0,T]$, and thus $\|x^{k+1}\|_{2}=\|x^{k}\|_{2}$. It can be concluded that the sequence $(x^k)$ does not converge in $L^2([0,T],\mathbb{R}^2)$ to $x=0$. A closer look to the iteration yields that $(x^k)$ does not even have a subsequence which weakly converges in $L^2([0,T],\mathbb{R}^2)$ to $x=0$.
\item Under Assumption~\ref{ass:overall-ivp}, \eqref{eq:rkdisscond} is equivalent to $E$ and $R$ fulfilling $\mathrm{rk}E=n$, or
$\mathrm{rk}\begin{bmatrix}
        E &
        R 
      \end{bmatrix}=n$ and $\alpha>0$.
\item In the case of ordinary differential equations, i.e., $E=I$, then Theorem~\ref{thm2} simplifies to the following:
\begin{enumerate}[(i)]
\item If $\mu>0$, or $\mathrm{rk}R=n$ and $\alpha <1$,
then $(z^k)$ converges to $z$ in $L^2([0,T],\mathbb R^n)$, and $(x^k)_k$ converges 
to $x$ in $H^1([0,T],\mathbb R^n)$.
\item $(x^k)_k$ converges 
to $x$ in $L^2([0,T],\mathbb R^n)$.  \hfill $\Box$
\end{enumerate}
\end{enumerate}
\end{remark}

The above discussion results in a Lions-Mercier-type dynamic iteration for PHS, which give summarize in Alg.~\ref{alg:mercier-PHS}.

\begin{algorithm}
\caption{Lions-Mercier-type Dynamic Iteration for PHS}\label{alg:mercier-PHS}
\fbox{\begin{minipage}{0.9\textwidth}
\begin{algorithmic}[1]
\Require problem data: $J_i$, $J_o$, $R_i$, $Q_i$, $E_i$, $B_i$, $u_i$, $x_{i,0}$ ($i=1,\dotsc, s$)
\Require choose for $i=1,\ldots, s$ $z^0_i \in L^2([0,T];\,\mathbb{R}^{n_i})$ 
        \qquad \% initial guess
\Require choose parameters $\alpha$, $\mu$, $\lambda$        
\State $R \, \leftarrow \diag(R_1,\dotsc,R_s)$, $E, Q$ analogously 
\State $K \, \leftarrow \, (1-\alpha)R+\mu EQ^{-1}-J_o $  
\State $z^0 \, \leftarrow \, \begin{bmatrix}(z_1^0)^\tr, \, \dotsc,\, (z_s^0)^\tr \end{bmatrix}^\tr$
\For{$k=0,1,2,\ldots$} 
    \For{$i=1,2,\dotsc,\, s$}
        \State solve for $x_i^k$
       \begin{align*}
         & \tfrac{d}{dt}E_ix_i^k=(J_i-\alpha R_i 
         +\mu E_iQ_i^{-1})Q_ix_i^k-\tfrac{1}{\lambda}Q_i x_i^k+  B_iu +  \tfrac{1}{\lambda}z_i^k
         \\
       &\text{with initial value } E_ix_i^k(0)=E_i x_{0,i}
       \end{align*}
    \EndFor    
    \State $x^k \, \leftarrow \,
          \begin{bmatrix}(x_1^k)^\tr, \, \dotsc,\, (x_s^k)^\tr \end{bmatrix}^\tr$
    \State $z^{k+1} \, \leftarrow \, 
         (I-\lambda  K)(I+\lambda  K)^{-1} 
        \left( 2 Qx^k-z^k \right)$ 
    \State partition $z^{k+1}$ into components $z_1^{k+1},\, \dotsc,\,  z_s^{k+1} $    
    \If{error small}
      return
    \EndIf  
\EndFor
\end{algorithmic}
\end{minipage}}
\end{algorithm}

\section{Numerical results}

We analyse the convergence rates for the new type of monotonic dynamic iterations (Lions-Mercier-type iteration). 
Then, we show numerical convergence results for the promising monotone convergence algorithms developed here. 
%

\subsection{Convergence rate discussion}

%

For $0<\alpha<1$ and
$K=((1-\alpha)R+\mu EQ^{-1}-J_o)$
the convergence rate is given by the contraction factor~\eqref{eq:estcayl3}
\begin{equation*}
  q^2
  = 1- \frac{4\lambda}{(1+\lambda \|K\|)^2 \|(K+J_o)^{-1}\|}.
\end{equation*}
The optimal $\lambda^*$ will have the smallest error reduction factor. We find:
\[
  \lambda^*(\mu) = \frac{1}{\| K \|} 
  \quad \; \text{and}\quad\;
    q^\ast\bigl(\mu,\, \lambda^{\ast}(\mu)\bigr)^2 
    = 1- \frac{1}{\|K\| \cdot \|(K+J_o)^{-1}\|} .
\]
\begin{remark}[Decoupled setting]
In the decoupled case, $J_o=0$, and hence the optimal error reduction reads:
\[
 q^\ast\bigl(\mu,\, \lambda^{\ast}(\mu)\bigr)^2 =1- \frac{1}{\cond{(K)}},
\]
which is only small for nearly equilibrated matrices $K$. This is in contrast to (e.g.) Jacobi dynamic iteration procedures, where we have immediate convergence. \hfill $\Box$
\end{remark}

Considering ODEs with $E=I$ and setting $\alpha=1$, we have $K=\mu Q^{-1} -J_o$.
\begin{remark}[ODE case]\label{rem:ODEcase}
\begin{enumerate}[a)]
    \item Transformation to $Q=I$: using the Cholesky factorization $Q=V^\top  V $, 
    we multiply~\eqref{eqn:DAEsystemQ}  with $V$, which yields
    \begin{align*}
    V \ddt x& %
                  =  V (J-R) (V^\top V)  x + V B u(t)
                \qquad
    \Leftrightarrow \quad \ddt \tilde{x}  = (\tilde{J} - \tilde{R}) \tilde{x} + \tilde{B} u(t)
    \end{align*}
    with $\tilde{x}=V x$, $\tilde J=V J V^\top, \tilde R= V R V^\top$ and $\tilde B= V B$ provides a remedy. 
    Correspondingly, $y= B^\top Q x$ is transformed into $\tilde y = {\tilde B}^\top \tilde x$.

    \item  \label{rem:ODEcase-decoupled}
    Decoupled setting ($J_o=0$): The reduction factor is  $q^\ast = 1-\tfrac{1}{\cond{(Q)}}$. Again, this is only small for scaling matrices $Q$ with small condition number. The above transformation to $Q=I$ yields $q^{*}=0$.
    An alternative is a shift with a matrix-valued $\lambda$ in Alg.~\ref{alg:mercier-PHS}.

    \item Coupled setting: for $Q=I$ and the Euclidean norm 
    this gives 
    \begin{align}\label{eq:lambda-opt-q-opt_Q=I}
    \lambda^*(\mu) & = \tfrac{1}{\sqrt{\mu^2+\lambda_{\max}(J_o^\tr J_o)}}
    \quad \text{and} \quad
       q^*\bigl(\mu,\lambda^*(\mu)\bigr)^2  =1-\tfrac{1}{\sqrt{1+{\lambda_{\max}(J_o^\tr J_o)}/{\mu^2}}}.
    \end{align}
\end{enumerate}

\end{remark}

\subsection{Results for monotone Lions-Mercier-type algorithm}

\subsubsection{Example with Jacobi failure}
For system~\eqref{eq:system-simple} %
holds 
$Q=I$ and $\lambda_{\max}(J_o^\tr J_o)=\nu^2$. This gives the optimal %
    contraction constant via~\eqref{eq:lambda-opt-q-opt_Q=I}: 
    $(q^*)^2 = {1-\tfrac{1}{\sqrt{1+ {\nu^2}/{\mu^2}}}}$.
We see the following:
\begin{remark}
\begin{enumerate}[a)]
    \item We have monotone convergence for the Lions-Mercier-type algorithm, Alg.~\ref{alg:mercier-PHS}. 
    \item In the limit of an undamped error bound $\omega \rightarrow 0^+$, the optimal convergence rate goes to one (for $\nu \neq 0$). 
     This indicates an extremely slow convergence. 
    \item One can get arbitrarily small convergence rates $q^*$ for $\mu \rightarrow \infty$. However, this corresponds to more and more damped error norms, which do not allow to estimate the numerically important undamped error. \hfill $\Box$
\end{enumerate}
\end{remark}

\subsubsection{Simple 2x2 system with scaling}
We introduce into the simple 2x2 example \eqref{eq:system-simple} %
a scaling matrix $Q$: 
\begin{equation}\label{eq:ivp-numericsQ}
    x' = (J\!-\!R) Q x, \quad\!  x(0)=x_0, \quad\! \text{with} \!\!
    \quad \!
  J\!-\!R = \begin{bmatrix}
  -\tau & \nu \\
  -\nu & -\tau
  \end{bmatrix}\!, \quad \!
  Q =\begin{bmatrix}
        q_1 & 0\\
        0   & q_2
  \end{bmatrix}\!, 
\end{equation}
where $\tau, \nu >0$ and  $q_1 \ge q_2 >0$.
In the decoupled case, $\nu=0$, the optimal reduction factor is given by (see Rem.~\ref{rem:ODEcase}\ref{rem:ODEcase-decoupled}):
\[
  q(\lambda^*) = 1- \tfrac{1}{\cond(Q)} 
  = 1 - \tfrac{q_2}{q_1}
\]
Fig.~\ref{fig:2x2Q} gives numerical results for the Lions-Mercier-type algorithm, showing that $Q$ not being a multiple of $I$ destroys the convergence in one step, although the system is fully decoupled.

\begin{figure}[!hbtp]
\centering
\begin{tabular}{@{}cc@{}}
    \includegraphics[width=0.482\textwidth]{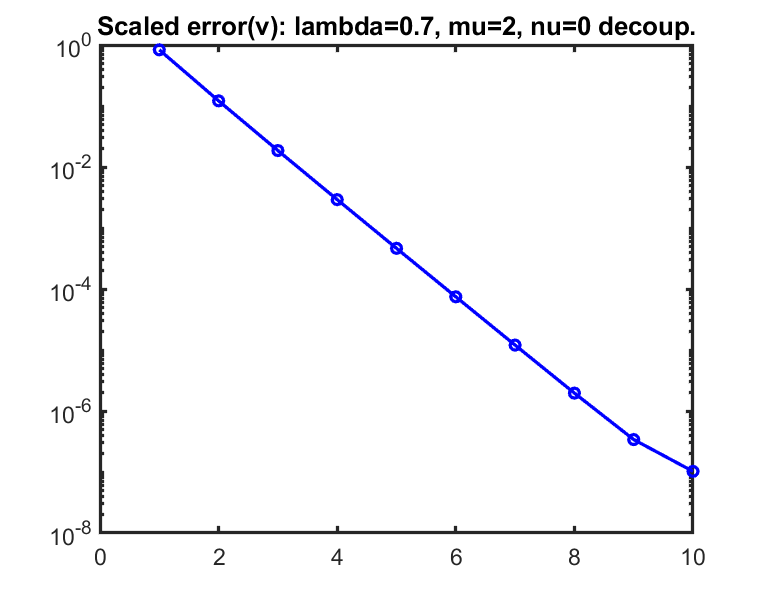}
    &
    \includegraphics[width=0.482\textwidth]{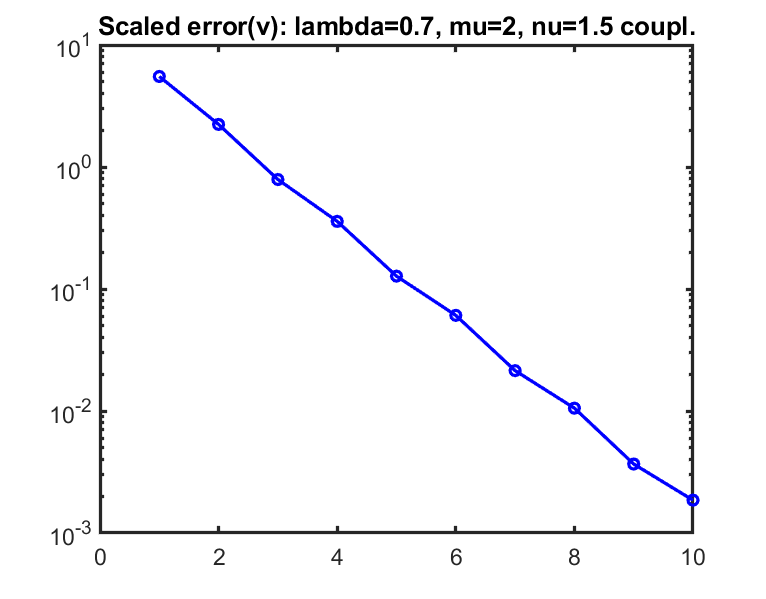}
\\[-1.75ex]
    {\small (a) decoupled, $Q\!=\!I$, non-optimal $\lambda$
     }
    &
    {\small (d) coupled, $Q\!=\!I$, non-optimal $\lambda$
      }
\\[2.5ex]
     \begin{minipage}[b]{0.40\textwidth}
        Decoupled with optimal $\lambda$:\\ the error reduced directly to zero 
        (no logarithmic plot). \\
        \mbox{} \\ \mbox{} \\ \mbox{}
      \end{minipage}
      &
     \includegraphics[width=0.482\textwidth]{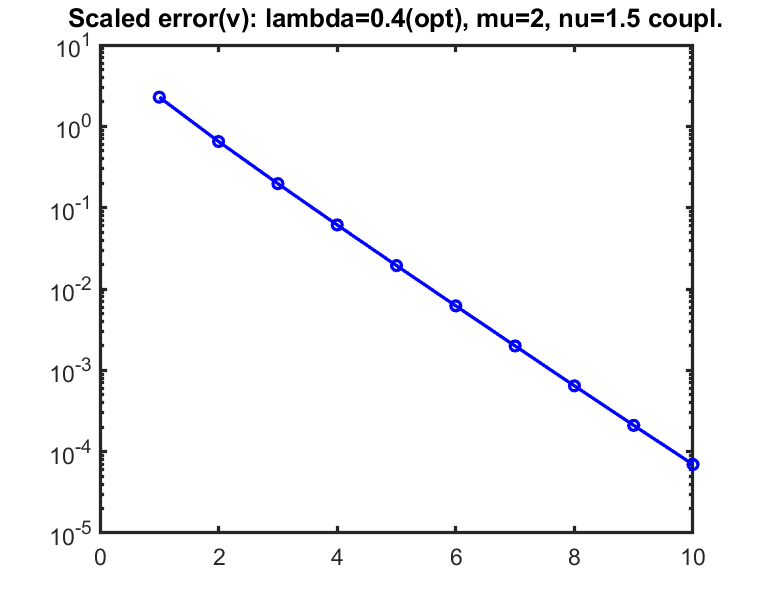}
\\[-1.75ex]
    {\small (b) decoupled, $Q=I$, optimal $\lambda$
    }
      &
    {\small (e) coupled, $Q=I$, optimal $\lambda$
      }
\\[2.5ex]
    \includegraphics[width=0.482\textwidth]{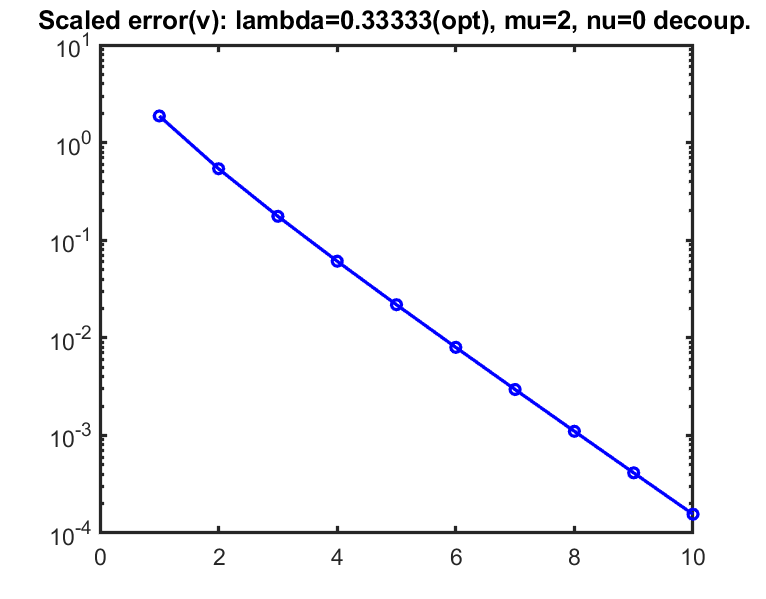}
    &
    \includegraphics[width=0.482\textwidth]{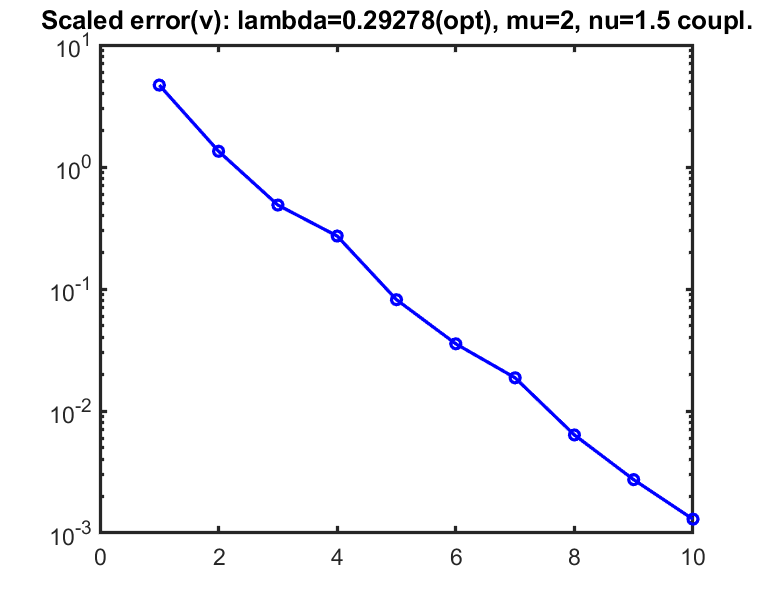}
\\[-1.75ex]
    {\small (c) decoupled, $Q\ne I$, optimal $\lambda$
    }
    &
    {\small (f) coupled, $Q\ne I$, optimal $\lambda$
     }
    \\
    $q_1=1.5$, $q_2=1$, $q(\lambda^*)\approx 0.3333$
    &
    $q_1=1.5$, $q_2=1$, $q(\lambda^*)\approx\frac{1}{3}$
     \end{tabular}
    
    \caption{Lions-Mercier-type iteration with~\eqref{eq:ivp-numericsQ}: number of iterations ($x$) versus scaled error in coupling term $v$/$z$ for various settings. First column provide the decoupled case ($\nu=0$), and the right column the coupled cases ($\nu=1.5$). Always $\tau=-0.5$ and $\omega=2.1$. }
    \label{fig:2x2Q}
\end{figure}

\begin{remark}
The convergence rates for the coupled cases -- even for the optimal choice of $\lambda$ should be improved to have more competitive dynamic iteration schemes. \hfill $\Box$
\end{remark}

\subsubsection{Two masses and three springs example with damping}
%
\begin{figure}[htb]
\begin{center}
\includegraphics[width=0.75\textwidth]{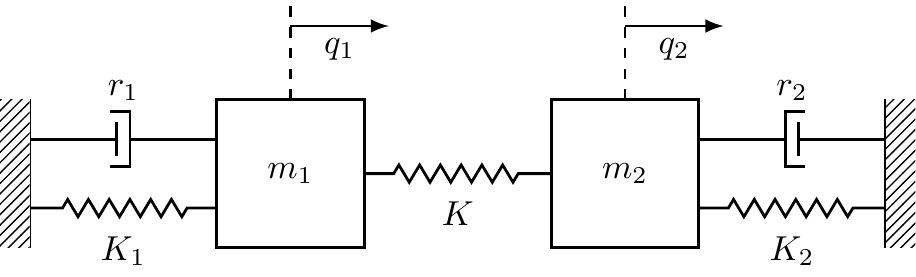}
\end{center}
\caption{\label{fig:two-masses_two-dampers_three-springs} ODE two masses oscillator with damping. The coordinates $q_1,\, q_2$ describe the position of the masses.}
\end{figure}

Now, we consider two masses $m_1,\, m_2 >0$, which are connected  via massless springs $K_1,\, K_2, K_3>0$ to walls with damping $r_1, r_2>0$, see  Fig.~\ref{fig:two-masses_two-dampers_three-springs}.
To set up the system, we have positions $q_1,\, q_2$ and momenta $p_1,\, p_2$ for the masses $m_1,\, m_2$, respectively. 
Then, the system can be modeled by the following Hamiltonian equation of motion:
\begin{equation*}%
\begin{aligned}
    \dot  q_{1} & = \tfrac{1}{m_{1}} p_{1}, 
    & \qquad
    \dot  q_{2} & = \tfrac{1}{m_{2}} p_{2}, 
    \\
     \dot p_{1} & %
                   =  -(K\!+\!K_1) q_{1} 
                    + K q_{2}\!-\! \tfrac{r_1}{m_1} p_1 ,
    & %
   \dot p_{2} & 
                 =  -(K\!+\! K_2 )  q_{2} 
                    + K q_1 \!-\! \tfrac{r_2}{m_2} p_2. 
\end{aligned}
\end{equation*}
Notice, this is not yet given in our PHS-format~\eqref{eqn:DAEsystemQ} for the  overall systems. Before we treat the overall system, we set up a description as coupled PH-subsystem as in Sec.~\ref{sec:coupled-ph-subsystems}, let $q=q_2$  denote the coupling variables, which we need for the first subsystem. 
Then, the two coupled PHS systems read:
    \begin{alignat*}{2}
        \dot x_1 & = (J_1-R_1) z_1 + B_1 u_1, 
        & \qquad\ \qquad \qquad \quad 
                    \dot x_2 & = (J_2-R_2) z_2 + B_2 u_2, \\
        y_1 & = B_1^\top z_1, 
        & 
                    y_2 & = B_2^\top z_2, \\
        & &          
                    u & = C y,
    \end{alignat*}
    with quantities for the first subsystem: 
    $x_1 =[p_1,\,q_1,\, q_1-q]^\tr$, 
    $ z_1 = Q_1  x_1$,  
    \begin{align*}
        &
        Q_1 \!=\!  \begin{bmatrix}
               \frac{1}{m_1} & 0 & 0 \\ 0 & K_1 & 0\\ 0 & 0 & K
           \end{bmatrix}\!, \;
           J_1 \!=\! \begin{bmatrix} 0 & -1 & -1 \\ 1 & 0 & 0 \\ 1 & 0 & 0 \end{bmatrix} \!, 
           \;
           R_1 \!=\! \begin{bmatrix}   r_1 
           & 0 & 0 \\ 
                                      0 & 0 & 0 \\ 
                                      0 & 0 & 0 \end{bmatrix} \!, 
           \;
           B_1 \!=\! \begin{bmatrix}
               0 \\ 0 \\ -1
          \end{bmatrix}
    \end{align*}
    and for the second subsystem we have: $x_2 =[p_2,\, q_2]^\tr$, $z_2= Q_2 x_2$     
    \begin{align*}
        & 
        Q_2 \!=\! \begin{bmatrix}
               \frac{1}{m_2} & 0 \\ 0 & K_2
        \end{bmatrix}\!, 
        \; J_2 \!=\! \begin{bmatrix} 0 & -1  \\ 1 & 0   \end{bmatrix}\!, 
        \quad
        \; R_2 \!=\! \begin{bmatrix}  r_2  
        & 0  \\ 0 & 0   \end{bmatrix}\!, 
        \quad
        B_2
        \!=\! \begin{bmatrix}
               1\\ 0 
        \end{bmatrix}\!, 
        \\
    \intertext{and the coupling interface reads:}
        u & \!=\! \begin{bmatrix}
               u_1 \\ u_2
        \end{bmatrix}, 
        \quad 
        y= \begin{bmatrix}
               y_1 \\ y_2
        \end{bmatrix}, 
        \quad 
        C= \begin{bmatrix}
               0 & 1 \\ -1 & 0  
        \end{bmatrix} .
    \end{align*}
Also the overall coupled system can be written in our pH-structure given in \eqref{eqn:DAEsystemQ},  with $F=\diag(F_1,F_2)$ for $F \in \{J,R,Q,B\}$, cf. Rem.~\ref{rem.coupledphs}:  %
\begin{align}
	\label{eq:2masses_ODE}
     \begin{bmatrix}
		x_1 \\ 
		x_2
	\end{bmatrix}'
	& = \left( [J-BCB^\top]-R \right) \cdot Q 
	  \begin{bmatrix}
		x_1 \\ 
		x_2
	\end{bmatrix}. %
\end{align}

 Numerical results for the Lions-Mercier-type Alg.~\ref{alg:mercier-PHS} are given in Fig.~\ref{fig:2masses}. %
 These results show that the estimate $q^\star \approx 0.975$ for the convergence rate (with an optimal choice leading to $q^\star \approx 0.944$) is quite pessimistic in this case (for $\mu=2$).

 \begin{figure}[!hbt]
     \centering
    \mbox{\!\!\!\!\includegraphics[width=0.36\textwidth]{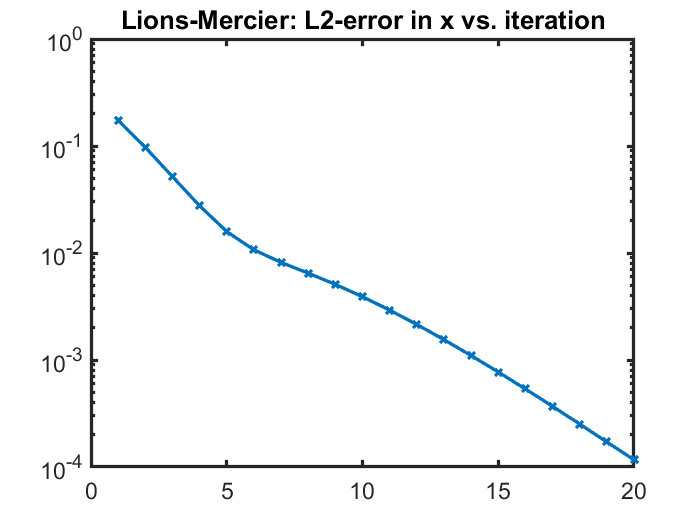}
     \!\!\!\!\!\!\!\includegraphics[width=0.36\textwidth]{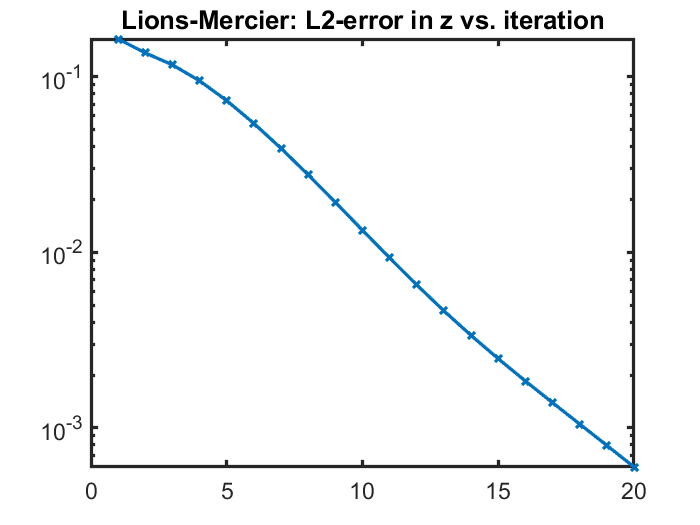}
     \!\!\!\!\!\!\!\includegraphics[width=0.36\textwidth]{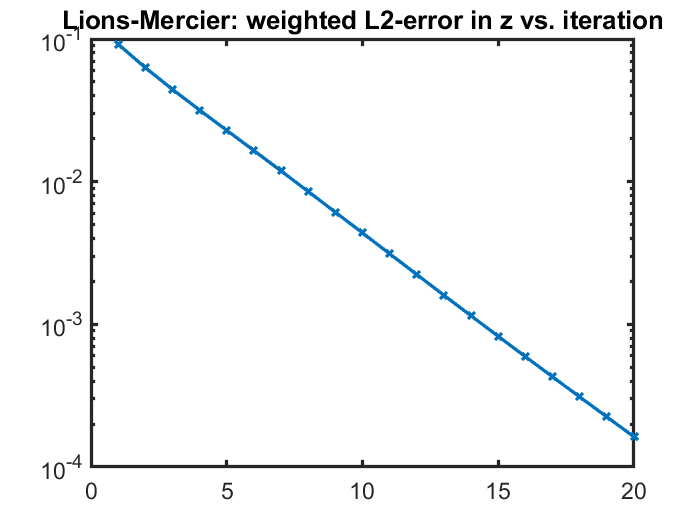}\!\!\!}
     \\
     \mbox{     \!\!\!\!\includegraphics[width=0.36\textwidth]{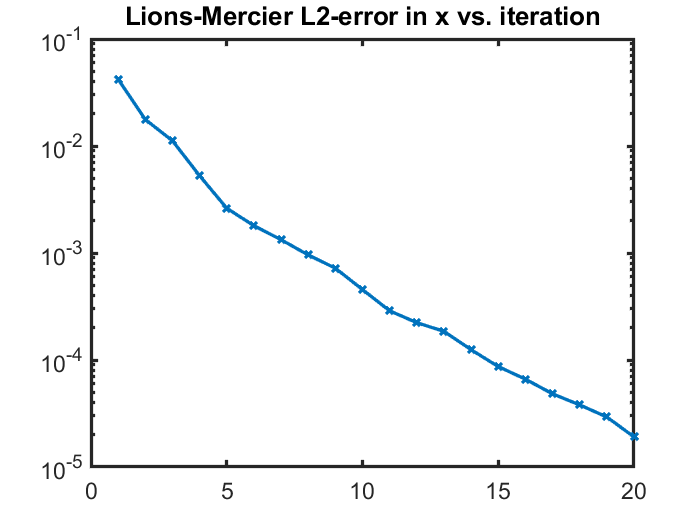}
     \!\!\!\!\!\!\!\includegraphics[width=0.36\textwidth]{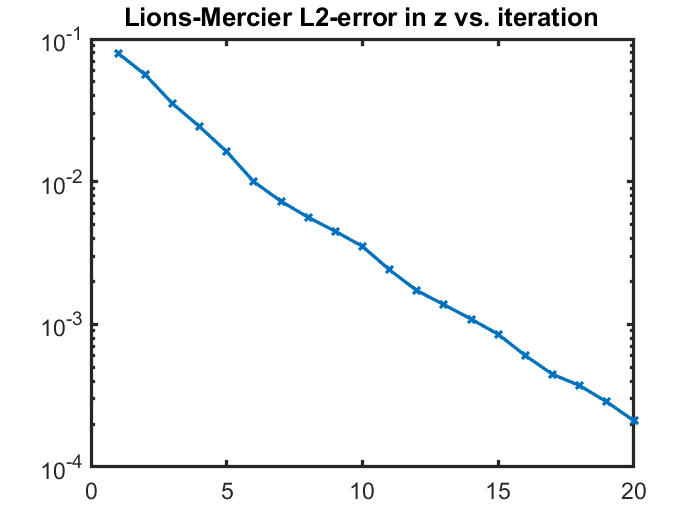}
     \!\!\!\!\!\!\!\includegraphics[width=0.36\textwidth]{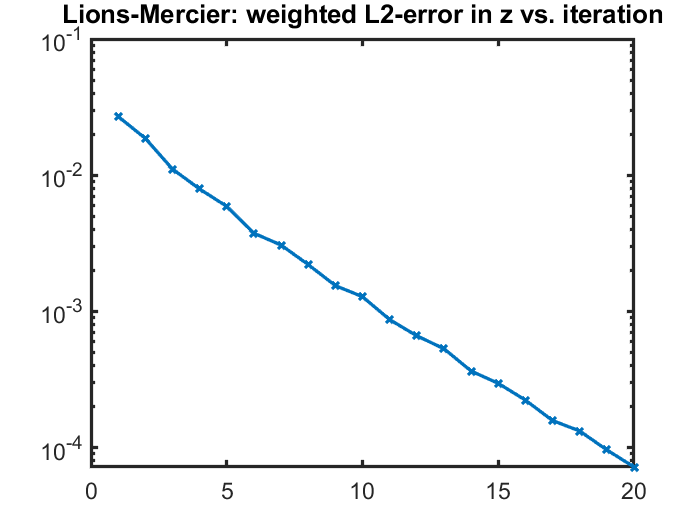}\!\!\!
    }
     \caption{Convergence analysis for Lions-Mercier-type iteration ($\lambda=1.5$, $\mu=2$ (first row)  $\mu=0.1$ (second row),  $\omega=2.2$, $\alpha=0.5$) for system \eqref{eq:2masses_ODE} with $K_i=2$, $m_i=2$ ($i=1,2$), $K=4$, $r_1=0.5$, $r_2=0.75$, initial values are zero apart from $p_{2,0}=0.1$. }
     \label{fig:2masses}
 \end{figure}
 \begin{remark}
Including coupling and output quantities as variables, i.e., 
$ \tilde x_i = \begin{bmatrix} x_i,\,  u_i,\,y_i \end{bmatrix}^\tr$ yields a DAE, which has no damping ($R$) in the coupling or output equations. Thus it does not fit into our framework.
$\hfill \Box$
 \end{remark}

Finally, we like to discuss a DAE example. 
To have a DAE multibody system, a simple choice are holonomic constraints. However, such a constraint comes without  dissipation. Hence $\text{rk}(E,R)$ is not full. Therefore, we change the field of applications. 
\subsection{Circuit example (DAE)}

The electric circuit in Fig.~\ref{fig:electric_circuit} can be modelled as follows:
for $x^\tr = \begin{bmatrix}
u_1, \, u_2,\, u_3,\, \jmath_1,\, u_4,\, u_5 \end{bmatrix}$ and
$E=\text{diag}(0, C_1, 0, L_1, 0, C_2)$
$B^\tr= \begin{bmatrix} 1,\, 0,\, 0,\, 0,\, 0,\, 0\end{bmatrix}$
\begin{align*}
&(E x)' = B \imath(t) +\\
&\left(\! \begin{bmatrix} \begin{array}{cccc|cc}
	0 &&&&&\\
	& 0 &&&&\\
	&   & 0 &  1&&\\
	&   &-1 &  0& 1 &\\ \hline
	&   &   &-1 &   0 &\\
	&   &   &   &   &  0 
	\end{array}
\end{bmatrix}
\!-\!
\begin{bmatrix}
\begin{array}{cccc|cc}
	\tfrac{1}{R_1} & -\tfrac{1}{R_1} &&&&
	\\
	-\tfrac{1}{R_1} & \tfrac{1}{R_1}\!+\!\tfrac{1}{R_2} & -\tfrac{1}{R_2} &&&
	\\
	& -\tfrac{1}{R_2} &\tfrac{1}{R_2}+\tfrac{1}{R_3}  & &  &	\\
	& & & 0 &&\\ \hline
	&       &  & & \tfrac{1}{R_4}  & -\tfrac{1}{R_4} \\
	&       &  & &-\tfrac{1}{R_4} & \tfrac{1}{R_4} \!+\! \tfrac{1}{R_5}
	\end{array}
\end{bmatrix}
\!\right) x.
\end{align*}
\begin{figure}[!htb]
\includegraphics[width=\textwidth]{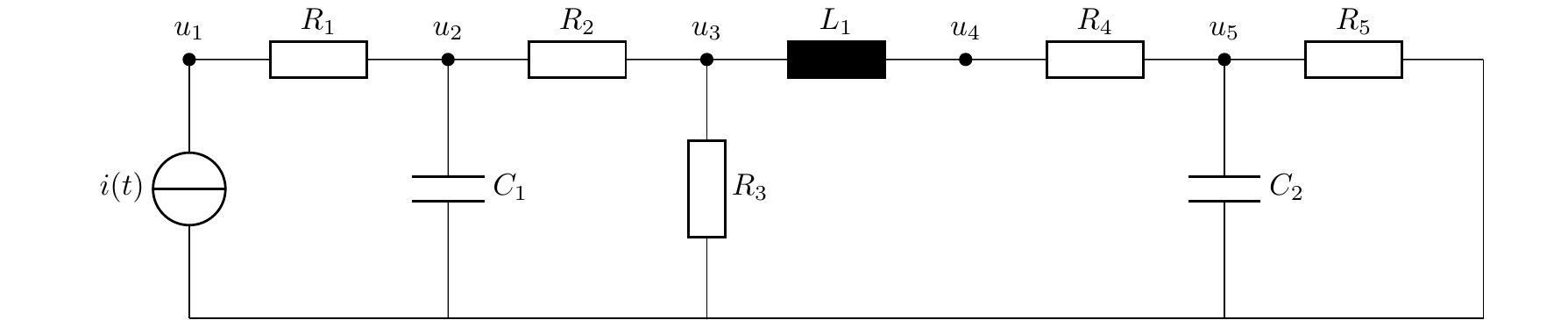}
\caption{Example of a simple electric circuit; $R_i>0$, $L_i>0$, $C_i>0$ and given $\imath\in L^{2}[0,T]$.}
\label{fig:electric_circuit}
\end{figure}
This example fulfills the rank condition $\text{rk}(E,R) = n=6$ and has block-diagonal $R$.

Numerical results for the Lions-Mercier-type Alg.~1 are given in Fig.~\ref{fig:electric_simple_circuit_results}. These
results show that the estimate $q \approx 0.9998$ for the convergence rate (with an optimal choice leading to $q^\star \approx 0.9980$) is quite pessimistic in this case, too.

\begin{figure}
    \centering
    \mbox{
    \!\!\!\!\includegraphics[width=0.36\textwidth]{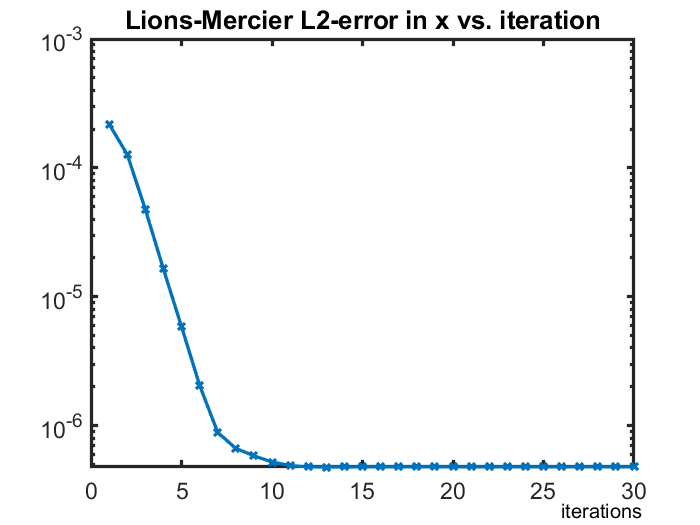}
     \!\!\!\!\!\!\includegraphics[width=0.36\textwidth]{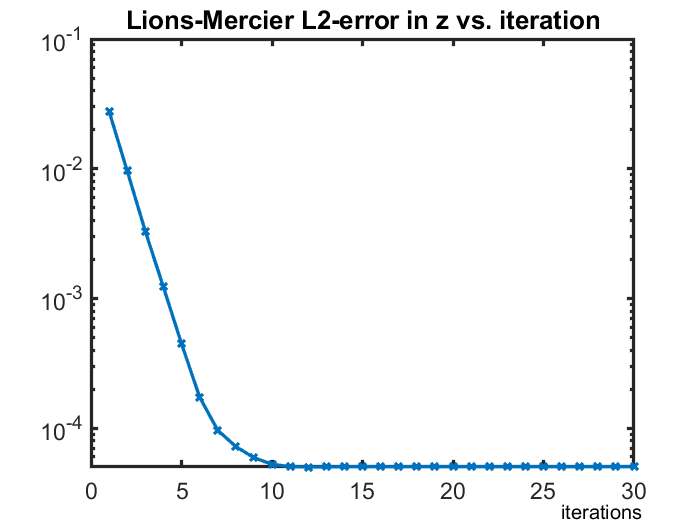}
     \!\!\!\!\!\!\includegraphics[width=0.36\textwidth]{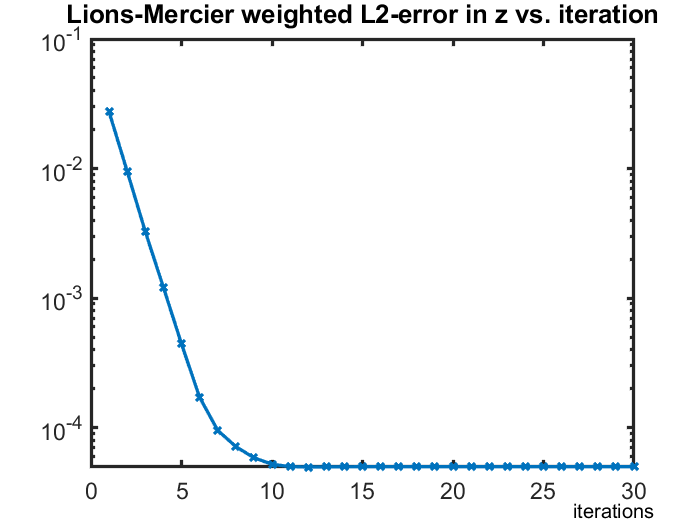}\!\!\!\!
    }
     \\
     {\small (a) error reduction: achieved precision vs. iteration count \\(left in $x$, center in $z$, right in $z$ with weighted $L^2$-norm);}
     \\[0.5ex]
     \mbox{
     \!\!\!\!\includegraphics[width=0.27\textwidth]{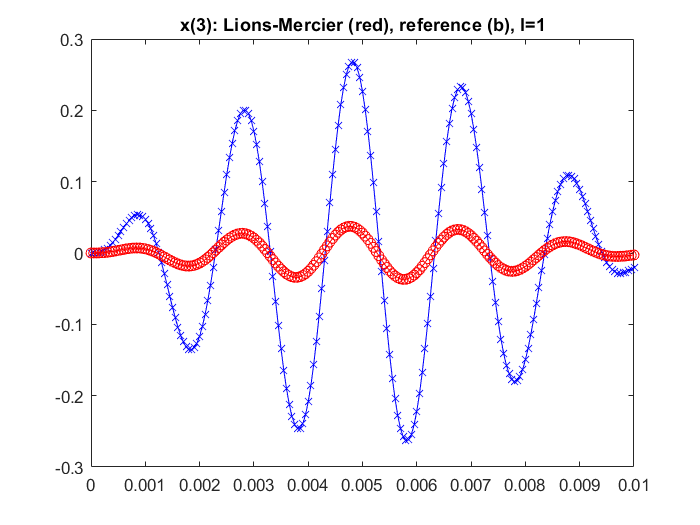}
     \!\!\!\!\!\!\includegraphics[width=0.27\textwidth]{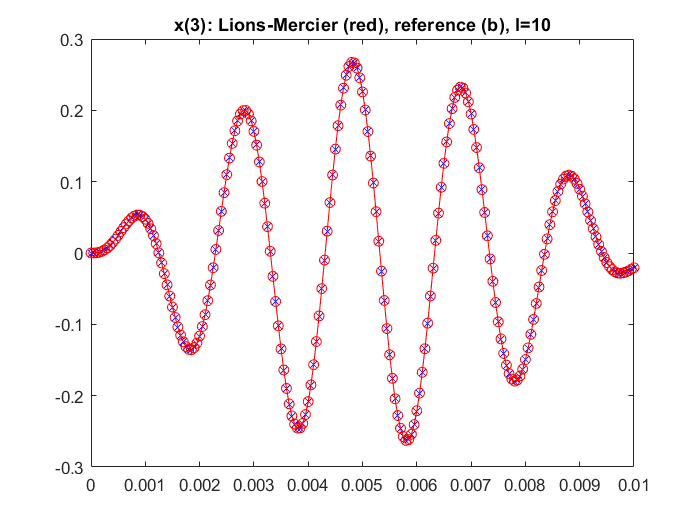}
     \!\!\!\!\!\!\includegraphics[width=0.27\textwidth]{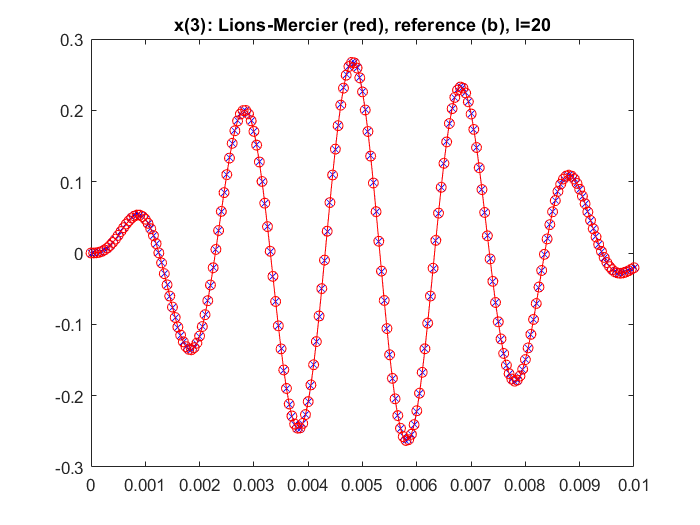}
     \!\!\!\!\!\!\includegraphics[width=0.27\textwidth]{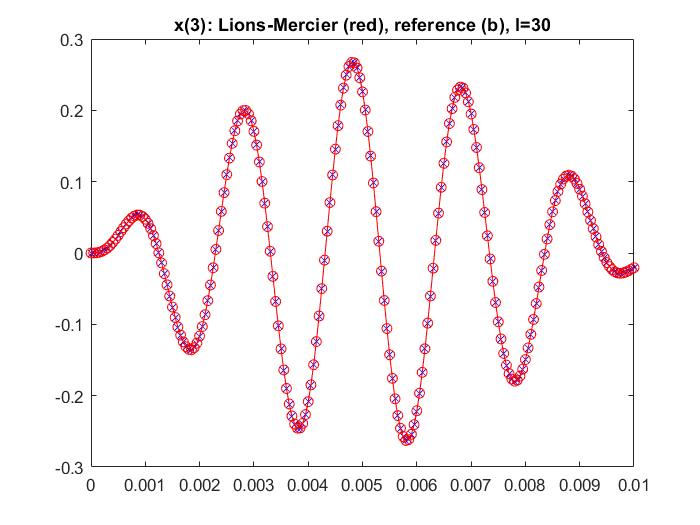}
     \!\!\!\!
    }
     \\
     {\small (b) convergence in $x_3$: we give results for iteration $l=1,10,20,30$;}
          \\[0.5ex]
          \mbox{%
     \!\!\!\!\includegraphics[width=0.27\textwidth]{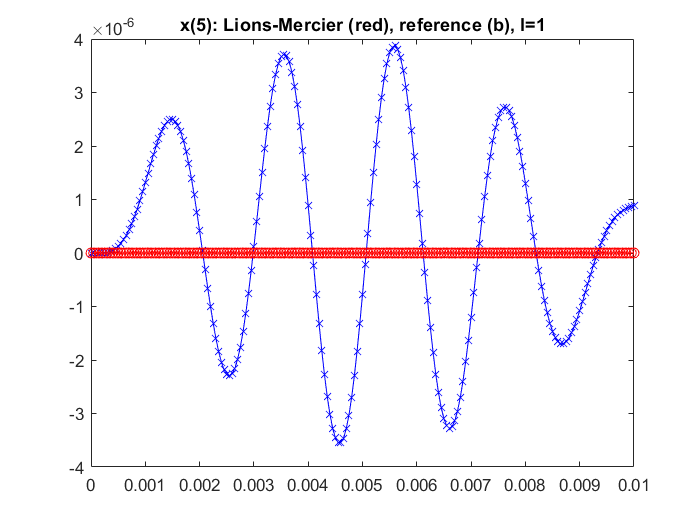}
     \!\!\!\!\!\!\includegraphics[width=0.27\textwidth]{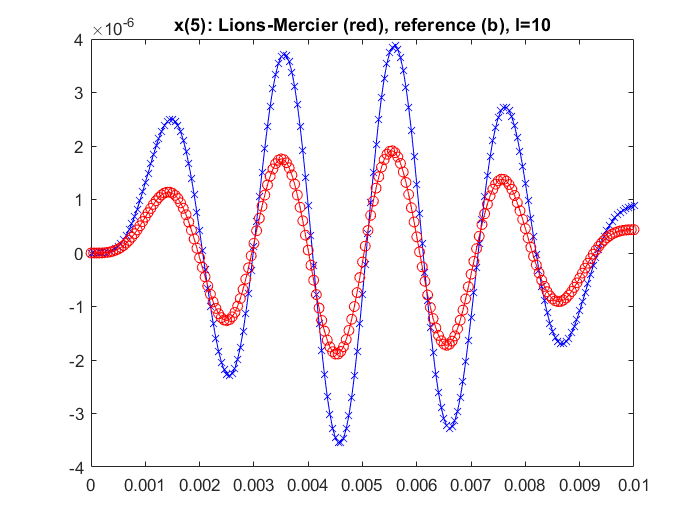}
     \!\!\!\!\!\!\includegraphics[width=0.27\textwidth]{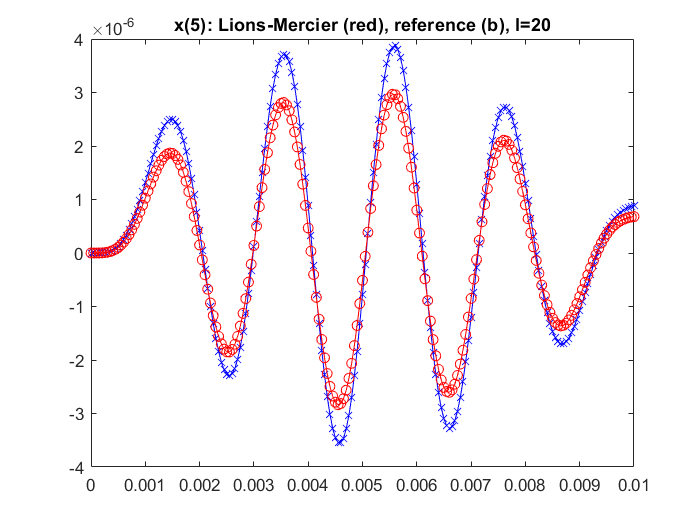}
     \!\!\!\!\!\!\includegraphics[width=0.27\textwidth]{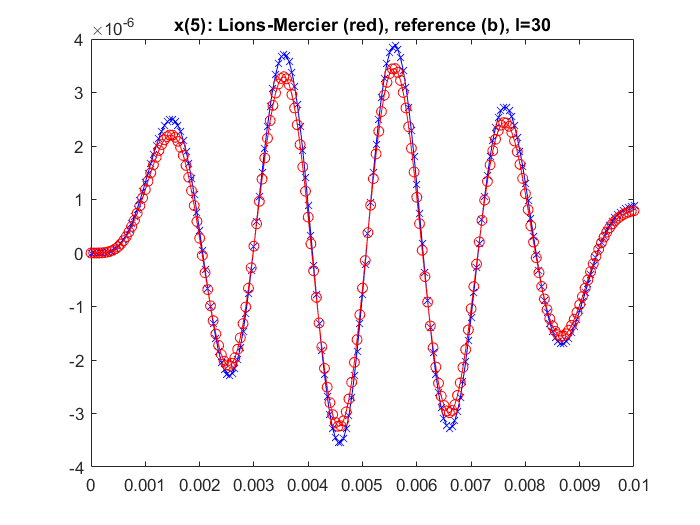}
     \!\!\!\!
    }
     \\
     {\small (c) convergence in $x_5$: we give results for iteration $l=1,10,20,30$.}

    \caption{Results for the academic circuit Fig.~\ref{fig:electric_circuit} using:
    $R_1=R_2=R_3=R_4=0.5,\, R_5=5$, $C_1=C_2=5\cdot 10^{-4}$, $L=20$, $i(t)=\sin(2\pi\cdot 50 t)\cdot \sin(2\pi\cdot 500 t)$.   
    Lions-Mercier parameters: $\lambda=1.2$, $\mu=1$,  $\omega=2.2$, $\alpha=0.2$. (a) shows the $L^2$ error reduction for different variables, (b) and (c) depict the convergence in a variable of the first subsystem $x_3$ and in the second subsystem $x_5$, respectively.}
    \label{fig:electric_simple_circuit_results}
\end{figure}

\begin{remark}[Rank condition for circuits]
We discuss circuit equation modelled by modified nodal analysis. \\
i) Let a circuit include one voltage sources. Then the model has a pure algebraic equation, which has no dissipation. Thus the rank condition is not satisfied.
\\ 
ii) Electric circuits with current sources, capacitors, inductors and resistors can be made to fulfill the rank condition by adding sufficiently many resistors.
\\
iii) The coupling---without auxiliary variables---can be facilitated via inductors. 
\hfill $\Box$
\end{remark}

\section{Conclusions}

For a general PHS setting, we have shown existence and uniqueness of weak solutions. Furthermore, the perspective of coupled systems and coupling variables are treated. Then, it is demonstrated that Jacobi-type of dynamic iteration may fail in practice due to finite precision. To circumvent this, a Mercier-Lions-type dynamic iteration scheme is developed, which guarantees monotone convergence in a related variable. The proof is based on properties of the Cayley transform. Numerical results demonstrate the monotone convergence. As a drawback, we observe that the currently achieved optimal convergence rates still need some improvements in order to pave the way for a broader application.


\bibliography{references}
\bibliographystyle{plain}

\end{document}